\documentclass[reqno]{amsart}
\usepackage{amsmath}
\usepackage{amssymb}
\usepackage{amsfonts}
\usepackage{amsthm}
\usepackage{enumerate}
\usepackage{enumitem}
\usepackage{bbm}
\usepackage[hidelinks]{hyperref}
\usepackage[sort&compress,numbers]{natbib}
\usepackage[left=2cm,right=2cm,bottom=3cm,top=3cm]{geometry}

\newtheorem{theorem}{Theorem}[section]

\newtheorem{proposition}[theorem]{Proposition}
\newtheorem{lemma}[theorem]{Lemma}

\theoremstyle{definition}
\newtheorem*{claim*}{Claim}
\newtheorem{definition}[theorem]{Definition}

\AtBeginDocument{   \def\MR#1{}}

\begin{document}
\title{The Borel complexity of non-Archimedean operator ranges}
\author{Martino Lupini}
\address{Dipartimento di Matematica, Universit\`{a} di Bologna, Piazza di
Porta S. Donato, 5, 40126 Bologna,\ Italy}
\email{martino.lupini@unibo.it}
\urladdr{http://www.lupini.org/}
\thanks{The author was partially supported by the Starting Grant 101077154
\textquotedblleft Definable Algebraic Topology\textquotedblright\ from the
European Research Council, the Gruppo Nazionale per le Strutture Algebriche,
Geometriche e le loro Applicazioni (GNSAGA) of the Istituto Nazionale di
Alta Matematica (INDAM), and the University of Bologna. }
\subjclass[2020]{Primary 12J25, 46S10; Secondary 30G06, 54H05}
\keywords{Non-Archimedean Fr\'echet space; non-Archimedean Banach space;
operator range; Polishable subgroup; Borel complexity; phantom subgroup;
group with a Polish cover; non-Archimedean non-trivially valued field}
\date{\today }

\begin{abstract}
We completely classify the possible complexity classes of non-closed
operator ranges on separable Banach spaces over a Polish non-Archimedean
non-trivially valued field. These are precisely $\boldsymbol{\Pi }%
_{1+\lambda +n+2}^{0}$ for a countable ordinal $\lambda $ that is either
zero or a limit ordinal, and a finite ordinal $n$. Considering Fr\'echet
spaces produces the additional complexity classes $\boldsymbol{\Pi }%
_{\lambda }^{0}$ for a countable limit ordinal $\lambda $.
\end{abstract}

\setcounter{tocdepth}{1}

\maketitle
\tableofcontents

\section{Introduction}

Operator ranges have been studied in the context of Hilbert spaces, Banach
spaces, and Fr\'echet spaces over real or complex numbers. By definition, an
operator range is simply the image of a continuous linear map. A survey of
results about operator ranges in the context of Hilbert spaces is provided
in \cite{fillmore_operator_1971}. More generally, operator ranges have also
been considered in the context of Banach spaces; see for example \cite%
{jimenez-sevilla_operator_2024,jimenez-sevilla_operator_2023,cross_continuous_1980,fonf_operator_2019}%
.

We restrict our attention to \emph{separable} Banach and Fr\'echet spaces.
Since operator ranges are, in particular, \emph{Borel} subspaces, it is
meaningful to consider their Borel complexity classes, in the sense of \emph{%
Borel complexity theory}; see \cite%
{kechris_classical_1995,gao_invariant_2009,moschovakis_descriptive_2009}. In
the case of operator ranges in the context of Hilbert spaces, it follows
from weak compactness of the unit ball of the Hilbert space that an operator
range is always a countable union of closed sets, i.e., it belongs to the
complexity class $\boldsymbol{\Sigma }_{2}^{0}$. In contrast, in the context
of Banach spaces one can produce operator ranges of arbitrarily high
complexity, as shown by Saint-Raymond in \cite%
{saint-raymond_espaces_1975,saint-raymond_espaces_1976}. The possible
complexity classes that arise in this context have been completely
classified by the author in \cite{lupini_complexity_2025}, both for Banach
spaces and for Fr\'echet spaces.

In this paper, we obtain the natural analogue of such a classification in
the \emph{non-Archimedean} context. We thus consider \emph{separable}
(non-Archimedean) Banach and Fr\'echet spaces over a Polish \emph{%
non-Archimedean non-trivially valued field }$K$. We consider continuous
linear operators on such spaces. As their ranges are always \emph{Borel}
subsets, it is meaningful to discuss their Borel complexity classes, in the
sense of \emph{Borel complexity theory}; see \cite%
{kechris_classical_1995,gao_invariant_2009,moschovakis_descriptive_2009}.

The main goal of this paper is to completely classify the possible Borel
complexity classes of operator ranges on separable Banach or Fr\'echet
spaces over $K$.

\begin{theorem}
\label{Theorem:main}Let $K$ be a Polish non-Archimedean non-trivially valued
field.

\begin{enumerate}
\item The following is a complete list of all the possible Borel complexity
classes of non-closed ranges of operators on separable Banach spaces over $K$%
: $\boldsymbol{\Pi }_{1+\lambda +n+2}^{0}$ for a countable ordinal $\lambda $
that is either zero or a limit ordinal, and a finite ordinal $n$.

\item For non-closed ranges of operators on separable Fr\'echet spaces, the
additional complexity classes $\boldsymbol{\Pi }_{\lambda }^{0}$ for a
nonzero countable limit ordinal $\lambda $ arise.
\end{enumerate}
\end{theorem}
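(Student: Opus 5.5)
The plan is to transfer the machinery of groups with a Polish cover and Solecki's analysis of Polishable subgroups (as used in \cite{lupini_complexity_2025}) to the non-Archimedean setting. Let $T\colon X\to Y$ be a continuous linear operator between separable Fr\'echet spaces over $K$. Replacing $X$ by $X/\ker T$, we may assume $T$ is injective. Then the range $T(X)$ is a Polishable subgroup of $Y$, with Polish group topology induced from $X$. Its complexity is governed by the Solecki subgroups $s_\alpha(T(X))$. These are defined by $s_0$ as the closure of $T(X)$, $s_{\alpha+1}$ as the first Solecki subgroup of $s_\alpha$, and intersections at limits. If $\alpha$ is least with $s_\alpha = T(X)$, the known results give complexity $\boldsymbol{\Pi}^0_{1+\alpha+1}$ when $\alpha$ is a successor and $\boldsymbol{\Pi}^0_{\alpha}$ (or $\boldsymbol{\Pi}^0_{1+\alpha}$) at limits. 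So the classification amounts to determining the possible Solecki lengths, together with the fact that the class is always some $\boldsymbol{\Pi}^0_\beta$ and never $\boldsymbol{\Sigma}$ or $\boldsymbol{\Delta}$.

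\textbf{Upper-bound / restriction step.} First I will show that each Solecki subgroup of an operator range is again an operator range, with $K$-linear structure. Concretely, $s_1(T(X))$ is the image under $T$ of the closure in $X$ of $T^{-1}$-preimages of small neighbourhoods. In the non-Archimedean setting I expect this to be computable via ultrametric balls, which are open $\mathcal{O}_K$-submodules. Using the Banach norm on $X$, I will show that when $X$ is Banach, $s_1(T(X))$ is the range of a Banach operator and is in fact the image $T(\overline{B})$-span-type set. This yields the key restriction: for Banach domains the length cannot be a limit ordinal exactly. The first Solecki subgroup of a Banach range is already ``$\boldsymbol{\Pi}^0_3$-like'', giving the offset $+2$. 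At limit stages the intersection is still Banach-like, which pushes the class to $1+\lambda+n+2$ rather than $\lambda$. For Fr\'echet spaces, writing $X=\varprojlim X_k$ with countably many seminorms lets the intersection stop exactly at a limit, producing $\boldsymbol{\Pi}^0_\lambda$.

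\textbf{Realization step.} For each countable $\lambda$ (zero or limit) and $n<\omega$, I will construct explicit operators between $c_0$-type spaces over $K$, namely $c_0(\mathbb{N},K)$ with weights. The base case $\boldsymbol{\Pi}^0_3$ comes from a diagonal operator $e_i\mapsto \pi^{i}e_i$ with $|\pi|<1$; its range is $\{y : |y_i|/|\pi|^i\to 0\}$, which is $\boldsymbol{\Pi}^0_3$-complete. I then iterate: from an operator with Solecki length $\alpha$, build one of length $\alpha+1$ via a weighted $c_0$-sum of rescaled copies. I will take countable $c_0$-sums for limit stages in the Banach case, which yields $\lambda+n+2$-type lengths. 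For the Fr\'echet case I will use countable products, which yield exact limit lengths. Completeness at the asserted class will follow from the Solecki-rank computation, using the analogue of the result that a Polishable subgroup of Solecki rank $\alpha$ is complete for the corresponding class.

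The main obstacle is the restriction step: proving that a Banach-space range can never have complexity exactly $\boldsymbol{\Pi}^0_\lambda$ for limit $\lambda$, nor $\boldsymbol{\Pi}^0_{\lambda+1}$. In the Archimedean case this uses convexity and the description of $s_1$ through closures of images of balls. Here I would try to replace convexity by the fact that balls are $\mathcal{O}_K$-modules, and use nontriviality of the valuation to rescale. The key point to establish is that for a Banach domain the chain $s_\alpha$ stabilizes one step after a limit only in a controlled way: $s_\lambda$ must be the range of a Banach operator, so that an additional ``$+2$'' jump is forced.
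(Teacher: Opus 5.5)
Your framework (reduce to Solecki/phantom lengths, rule out limit lengths for Banach domains, build examples, take products for Fr\'echet limits) matches the paper's. However, the step that carries the non-Archimedean content is missing. You take as a ``fact'' that a non-closed range is never of exact class $\boldsymbol{\Sigma}^0_{1+\lambda+1}$ or $D(\boldsymbol{\Pi}^0_{1+\lambda+n+1})$. Over $\mathbb{R}$ or $\mathbb{C}$ this is false, since every Hilbert-space operator range is $\boldsymbol{\Sigma}^0_2$. Your later appeal to ``the analogue of the result that a Polishable subgroup of Solecki rank $\alpha$ is complete for the corresponding class'' is the same unproved claim: non-closed $\boldsymbol{\Sigma}^0_2$ subgroups have Solecki rank $1$ but are not $\boldsymbol{\Pi}^0_3$-complete. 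Likewise, the lower bound $\boldsymbol{\Pi}^0_3$ is not a Banach-specific ``offset $+2$'' (the first Solecki subgroup of any Polishable subgroup is $\boldsymbol{\Pi}^0_3$); it comes from excluding $\boldsymbol{\Sigma}^0_2$.

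The $D(\boldsymbol{\Pi})$ cases follow from Hjorth's theorem, because Fr\'echet spaces over $K$ are non-Archimedean Polish groups. The $\boldsymbol{\Sigma}$ cases need the paper's rigidity argument. Suppose a Fr\'echet subspace $N$ is dense and $\boldsymbol{\Sigma}^0_2$ in $X$. Baire category in $N$ gives an open lattice $B$ of $N$ that is closed in $X$. Since $B/\pi B$ is countable, $\pi B$ is relatively open in $B$. For a closed $\mathcal{O}_K$-submodule $C$ with $\pi C$ relatively open, replace $z\in KC\setminus C$ by $\pi^{j}z$ with $j$ least such that $\pi^{j}z\in C$; this shows $C$ is open in $KC$, whence $KC$ is closed. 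Since $KB=N$, you get $N=X$. Applying this to $\mathrm{Ph}^\alpha(X/N)$ excludes every length $\alpha+1/2$.

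The tools you name at the end ($\mathcal{O}_K$-submodules, rescaling by $\pi$) are the right ones, but you aim them at the limit restriction, which needs only local boundedness. A bounded zero neighborhood of $N=X_\lambda$ contains $W\cap N$ for some zero neighborhood $W$ of some $X_\beta$ with $\beta<\lambda$. So $N$ carries the subspace topology of $X_\beta$, is therefore closed in $X_\beta$, and being dense equals it. Note also that $X_\lambda$ is in general only Fr\'echet, not ``Banach-like''.

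Your realization step has a second gap: the proposed successor mechanism fails at the first nontrivial step. A weighted $c_0$-sum $\bigoplus^{c_0}_n w_n D$ of rescaled copies of your diagonal operator $D$ is again a diagonal operator on $c_0(\mathbb{N}\times\mathbb{N},K)$. The range $\{y: y_k/d_k\to 0\}$ of a diagonal operator is always $\boldsymbol{\Pi}^0_3$, so the length stays $1$. Sums and products are useful for combining lengths, as in the paper's Fr\'echet limit case. But each successor step needs a new constraint tying a coordinate to limits along infinitely many other coordinates.

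The paper gets this from the bush $T^\eta$. On $X_0=c_0(T^\eta_0,K)$ it imposes, at nodes $v$ of rank $\gamma$, convergence of $(q_n x(v^{\frown}n))_n$, with $c_0$-control on these sequences and on the edges crossing level $\gamma$. It then checks inductively that each $X_{1+\beta}$ is a proper dense $\boldsymbol{\Pi}^0_3$ Banach subspace of $X_{<1+\beta}$, with $\mathrm{Ph}^\beta(X_0/X_\eta)=X_{<1+\beta}/X_\eta$. Without such a construction your proposal produces no example of length $2$ (class $\boldsymbol{\Pi}^0_4$). The higher and limit cases, which are assembled from such examples, are likewise unsupported.
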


The rest of this paper is organized as follows. Section~\ref%
{Section:complexity} recalls the descriptive and categorical framework.
Section~\ref{Section:non-Archimedean} reviews non-Archimedean Fr\'{e}chet
and Banach spaces and proves the rigidity and upper-bound results. Section~%
\ref{Section:Trees} records the bush topology on well-founded trees. Section~%
\ref{Section:realization} constructs the required realizations and completes
the proof of the classification.

\subsection*{Acknowledgments}

ChatGPT by OpenAI (version GPT-5.6) and Claude by Anthropic (version Opus 5)
have been used in the preparation of this manuscript.

\section{Complexity classes\label{Section:complexity}}

In this section we recall some notions from topology and descriptive set
theory as can be found in \cite{kechris_classical_1995,gao_invariant_2009},
and the theory of groups with a Polish cover as developed in \cite%
{lupini_looking_2024,bergfalk_definable_2024,bergfalk_definable_2024-1,lupini_complexity_2025,casarosa_homological_2026}%
.

\subsection{Ordinal notation}

In this section, we introduce notation for ordinals and \textquotedblleft
fractional ordinals\textquotedblright. We let $\omega $ be the first \emph{%
infinite }ordinal, and $\omega _{1}$ be the first \emph{uncountable }%
ordinal. We let $\omega _{1}[1/2]$ be the set that contains, for every
countable ordinal $\alpha $, both $\alpha $ and $\alpha +1/2$ subject to the
relations%
\begin{equation*}
\alpha <\alpha +\frac{1}{2}<\alpha +1
\end{equation*}%
and%
\begin{equation*}
\beta +(\alpha +\frac{1}{2})=\left( \beta +\alpha \right) +\frac{1}{2}\text{.%
}
\end{equation*}%
We also let $\omega _{1}^{\mathrm{Ph}}$ be the set that contains $\omega
_{1}[1/2]$ and, for every $\alpha <\omega _{1}$, the element $\alpha
+1/2+\varepsilon $ subject to the relations%
\begin{equation*}
\alpha +\frac{1}{2}<\alpha +\frac{1}{2}+\varepsilon <\alpha +1
\end{equation*}%
and%
\begin{equation*}
\beta +(\alpha +\frac{1}{2}+\varepsilon )=\left( \beta +\alpha \right) +%
\frac{1}{2}+\varepsilon \text{\quad and\quad }(\alpha +\frac{1}{2}%
)+\varepsilon =\alpha +\frac{1}{2}+\varepsilon \text{.}
\end{equation*}%
If $\alpha $ is a successor ordinal, we let $\alpha -1$ be its immediate
predecessor, and%
\begin{equation*}
\alpha -\frac{1}{2}=\left( \alpha -1\right) +\frac{1}{2}\text{\quad and\quad 
}\alpha -\frac{1}{2}+\varepsilon =(\alpha -\frac{1}{2})+\varepsilon \text{.}
\end{equation*}%
We also set $\omega _{1}^{\mathbf{\Delta }}\subseteq \omega _{1}^{\mathrm{Ph}%
}$ to be the subset containing $\omega _{1}[1/2]$ together with $\lambda
+1/2+\varepsilon $ whenever $\lambda $ is either zero or a limit ordinal.

\subsection{Complexity classes}

Recall that a Polish space is a second countable topological space that
admits a compatible complete metric. A Polish group is a topological group
whose topology is Polish. A \emph{complexity class }$\Gamma $ is an
assignment $X\mapsto \Gamma \left( X\right) $ of a collection $\Gamma \left(
X\right) $ of Borel subsets of $X$ to each Polish space $X$, in such a way
that a continuous function $f:X\rightarrow Y$ induces a function $\Gamma
\left( Y\right) \rightarrow \Gamma \left( X\right) $ by taking preimages. We
will consider the complexity classes $\boldsymbol{\Pi }_{\alpha }^{0}$ and $%
\boldsymbol{\Sigma }_{\alpha }^{0}$ for $\alpha <\omega _{1}$. For $\alpha
<\omega _{1}$ we will also consider the complexity class $D(\boldsymbol{\Pi }%
_{\alpha }^{0})$ comprising the sets that can be written as the intersection
of an element of $\boldsymbol{\Pi }_{\alpha }^{0}$ and an element of $%
\boldsymbol{\Sigma }_{\alpha }^{0}$.

The \emph{dual class }$\check{\Gamma}$ of a complexity class $\Gamma $
contains the \emph{complements }of the elements of $\Gamma $. A complexity
class is not \emph{self-dual} if it is different from its dual class. In
this case, for a Polish space $X$ and a Borel subset $A$, one says that $%
\Gamma $ is the complexity class of $A$ if $A\in \Gamma \left( X\right) $
and $A\notin \check{\Gamma}\left( X\right) $.

We define an increasing sequence $\left( \Gamma _{\alpha }\right) _{\alpha
\in \omega _{1}^{\mathrm{Ph}}}$ of complexity classes as follows: For $%
\lambda <\omega _{1}$ either zero or limit, $n<\omega $, and $i\in \left\{
0,1/2,1/2+\varepsilon \right\} $, define:%
\begin{equation*}
\Gamma _{\lambda +n+i}:=\left\{ 
\begin{array}{ll}
\boldsymbol{\Pi }_{1+\lambda }^{0} & \text{for }n=0\text{ and }i=0\text{;}
\\ 
\boldsymbol{\Sigma }_{1+\lambda +1}^{0} & \text{for }n=0\text{ and }i=1/2%
\text{;} \\ 
D(\boldsymbol{\Pi }_{1+\lambda +1}^{0}) & \text{for }n=0\text{ and }%
i=1/2+\varepsilon \text{;} \\ 
D(\boldsymbol{\Pi }_{1+\lambda +n+1}^{0}) & \text{for }n\geq 1\text{ and }%
i\in \left\{ 1/2,1/2+\varepsilon \right\} \text{;} \\ 
\boldsymbol{\Pi }_{1+\lambda +n+1}^{0} & \text{for }n\geq 1\text{ and }i=0%
\text{;}%
\end{array}%
\right.
\end{equation*}%
For a Borel subset $A$ of a Polish space $X$, we define the \emph{Borel
class }of $A$ in $X$ to be the least $\alpha \in \omega _{1}^{\mathrm{Ph}}$
such that $A\in \Gamma _{\alpha }\left( X\right) $. Notice that for every
successor ordinal $\sigma $, the classes $\Gamma _{\sigma +1/2}$ and $\Gamma
_{\sigma +1/2+\varepsilon }$ are equal. Thus, by definition the Borel class
of $A$ in $X$ must be in $\omega _{1}^{\mathbf{\Delta }}$.

\subsection{Homogeneous spaces with a Polish cover}

Let $G$ be a Polish group. A \emph{Polish subgroup }of $G$ (also called 
\emph{Polishable }subgroup) is a subgroup $N$ of $G$ that is also a Polish
group with respect to a (necessarily unique) Polish group topology that
makes the inclusion $N\rightarrow G$ continuous. A homogeneous space with a
Polish cover is a pointed space of the form $G/N$ where $G$ is a Polish
group and $N$ is a Polish subgroup of $G$.\ A homogeneous subspace with a
Polish cover of $G/N$ is a subspace of the form $H/N$ where $H$ is a Polish
subgroup of $G$ containing $N$.\ In particular, $\left\{ \ast \right\} =N/N$
is a homogeneous subspace with a Polish cover. We define the Borel class of $%
H/N$ in $G/N$ to be the Borel class of $H$ in $G$. It is proved in \cite%
{lupini_complexity_2025} building on \cite%
{solecki_polish_1999,farah_borel_2006} that for every $\alpha \in \omega
_{1} $ there exists a \emph{least }homogeneous subspace with a Polish cover 
\textrm{Ph}$^{\alpha }\left( G/N\right) =G_{\alpha }/N$ of $G/N$ of Borel
class at most $\alpha $.

We define the \emph{Borel length }$\ell _{\boldsymbol{\Delta }}\left(
G/N\right) \in \omega _{1}^{\mathrm{Ph}}$ of $G/N$ to be the Borel class of $%
\left\{ \ast \right\} $ in $G/N$ or, equivalently, the Borel class of $N$ in 
$G$. One defines the \emph{phantom length }$\ell _{\mathrm{Ph}}\left(
G/N\right) \in \omega _{1}^{\mathrm{Ph}}$ of $G/N$ as follows:

\begin{definition}
Let $G/N$ be a homogeneous space with a Polish cover. Set $\mathrm{Ph}%
^{\alpha }\left( G/N\right) :=G_{\alpha }/N$. For $\alpha <\omega _{1}$
define:

\begin{itemize}
\item $\ell _{\mathrm{Ph}}\left( G/N\right) \leq \alpha \Leftrightarrow 
\mathrm{Ph}^{\alpha }\left( G/N\right) =\left\{ \ast \right\} $;

\item $\ell _{\mathrm{Ph}}\left( G/N\right) \leq \alpha +1/2\Leftrightarrow
\left\{ \ast \right\} \in \boldsymbol{\Sigma }_{2}^{0}(\mathrm{Ph}^{\alpha
}\left( G/N\right) )$;

\item $\ell _{\mathrm{Ph}}\left( G/N\right) \leq \alpha +1/2+\varepsilon
\Leftrightarrow \left\{ \ast \right\} \in D(\boldsymbol{\Pi }_{2}^{0})(%
\mathrm{Ph}^{\alpha }\left( G/N\right) )$.
\end{itemize}
\end{definition}

By \cite[Theorem 6.1]{lupini_complexity_2025}, the phantom length of $G/N$
completely determines the Borel class of $\left\{ \ast \right\} $ in $G/N$:

\begin{theorem}
Let $G/N$ be a homogeneous space with a Polish cover. Suppose that $\alpha
\in \omega _{1}^{\mathrm{Ph}}$. Then%
\begin{equation*}
\ell _{\boldsymbol{\Delta }}\left( G/N\right) \leq \alpha \Leftrightarrow
\ell _{\mathrm{Ph}}\left( G/N\right) \leq \alpha \text{.}
\end{equation*}
\end{theorem}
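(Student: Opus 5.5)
This theorem is quoted from \cite[Theorem 6.1]{lupini_complexity_2025}, so within the present paper I would simply cite it. Were I to reprove it, I would argue by induction on $\alpha \in \omega_1^{\mathrm{Ph}}$, comparing the two lengths level by level via the canonical sequence of Solecki subgroups $G_\alpha$ from \cite{solecki_polish_1999,farah_borel_2006}. The basic tool is the characterization of the first Solecki subgroup: for a Polishable $N \leq G$, $G_1$ is the intersection over neighbourhoods $V$ of the identity in $N$ of the $G$-closures $\overline{V}^{G}$. It is the smallest $\boldsymbol{\Pi}_3^0$ Polishable subgroup of $G$ containing $N$, and $N$ is dense in $G_1$ with respect to the Polish topology of $G_1$. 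Iterating this, and taking intersections at limit stages, gives $G_\alpha$, and one shows that $G_\alpha$ has Borel class at most $\alpha$ in $G$ and is least with this property.

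For the implication $\ell_{\mathrm{Ph}} \leq \alpha \Rightarrow \ell_{\boldsymbol{\Delta}} \leq \alpha$, I would proceed as follows.
\begin{itemize}
\item If $\mathrm{Ph}^\alpha(G/N)$ is trivial, then $N = G_\alpha$, which has Borel class at most $\alpha$ by construction.
\item If $\{*\}$ is $\boldsymbol{\Sigma}_2^0$ or $D(\boldsymbol{\Pi}_2^0)$ in $G_\alpha/N$, then $N$ is $\boldsymbol{\Sigma}_2^0$ or $D(\boldsymbol{\Pi}_2^0)$ in the Polish group $G_\alpha$.
\item Since the inclusion $G_\alpha \to G$ is a continuous injection and $G_\alpha \in \Gamma_\alpha(G)$, a change-of-topology argument shows that sets which are $\boldsymbol{\Pi}^0_\beta$ in $G_\alpha$ are $\boldsymbol{\Pi}^0_{\gamma+\beta}$-type in $G$. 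Concretely, I would refine the topology of $G$ to make $G_\alpha$ clopen, which costs exactly the class of $G_\alpha$.
\item This gives exactly the shifted classes $\Gamma_{\alpha+1/2}$ and $\Gamma_{\alpha+1/2+\varepsilon}$. Here the additive rule $\beta+(\alpha+1/2)=(\beta+\alpha)+1/2$ and the case distinctions in the definition of $\Gamma$, namely $n=0$ versus $n\geq1$, must be checked case by case.
\end{itemize}

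For the converse, I would suppose $N \in \Gamma_\alpha(G)$. By minimality, $G_\alpha \subseteq N$, hence $G_\alpha = N$ when $\alpha$ is an ordinal, and this case is done. For $\alpha+1/2$ and $\alpha+1/2+\varepsilon$, I would use $N \in \Gamma_{\alpha+1/2}(G)$ to deduce $G_{\alpha+1}=N$, and then show that $N$ is $\boldsymbol{\Sigma}_2^0$ (resp.\ $D(\boldsymbol{\Pi}_2^0)$) relative to $G_\alpha$. This relativization is the step I expect to be the main obstacle. It requires a Baire category argument, in the style of Solecki's proof that a $\boldsymbol{\Pi}^0_3$ Polishable subgroup which is dense in the first Solecki subgroup must be all of it, to lower the complexity computed in $G$ down to complexity computed in the Polish topology of $G_\alpha$. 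I would also use that a $\boldsymbol{\Sigma}_2^0$ Polishable subgroup is a countable union of closed sets and hence, by Baire category, is open in its own closure.
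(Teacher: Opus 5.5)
Your proposal matches the paper: the paper gives no proof of this statement and simply quotes \cite[Theorem 6.1]{lupini_complexity_2025}, which is exactly what your first sentence does. If you ever carry out the reproof sketch, correct two misstatements in it. First, $\bigcap_{V}\overline{V}^{G}$ is a closed subgroup that does not contain $N$ in general (it is trivial when $N=G$); the first Solecki subgroup is $\bigcap_{V}\overline{V}^{G}N$. Second, a $\boldsymbol{\Sigma}_{2}^{0}$ Polishable subgroup need not be open in its closure: $\ell^{1}$ is a proper dense $\boldsymbol{\Sigma}_{2}^{0}$ Polishable subgroup of $\ell^{2}$. Baire category only gives an identity neighbourhood of $N$ that is closed in $G$, as in the proof of Lemma \ref{Lemma:length-less-than-1}.
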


One defines a group with a Polish cover to be a homogeneous space with a
Polish cover $G/N$ in which $N$ is normal in $G$. In this case, a subgroup
with a Polish cover is a homogeneous subspace with a Polish cover $H/N$ such
that $H$ is normal in $G$.\ When $G/N$ is a group with a Polish cover, $%
\mathrm{Ph}^{\alpha }\left( G/N\right) $ is a subgroup with a Polish cover
for every $\alpha <\omega _{1}$.

\subsection{Non-Archimedean groups}

We recall the definition of non-Archimedean Polish group. A Polish group $G$
is \emph{non-Archimedean} if it has a basis of identity neighborhoods
consisting of (necessarily open) subgroups. A homogeneous space with a
non-Archimedean Polish cover is a homogeneous space with a Polish cover $G/N$
where both $G$ and $N$ are non-Archimedean. In this case, $\mathrm{Ph}%
^{\alpha }\left( G/N\right) $ is also a homogeneous space with a
non-Archimedean Polish cover for every $\alpha <\omega _{1}$.

If $G/N$ is a homogeneous space with a Polish cover, and $N$ is
non-Archimedean, then the \emph{phantom length} of $G/N$ necessarily belongs
to $\omega _{1}[1/2]$ by the main results of \cite{hjorth_borel_1998} and 
\cite[Theorem 3.3]{lupini_complexity_2025}; see \cite[Theorem 6.1]%
{lupini_complexity_2025}. Thus, in this case we have:

\begin{theorem}
Let $G/N$ be a homogeneous space with a Polish cover. If $N$ is a
non-Archimedean Polish group, then%
\begin{equation*}
\ell _{\boldsymbol{\Delta }}\left( G/N\right) =\ell _{\mathrm{Ph}}\left(
G/N\right) \in \omega _{1}[1/2]\text{.}
\end{equation*}
\end{theorem}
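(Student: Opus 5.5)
The plan is to combine the preceding theorem (\cite[Theorem 6.1]{lupini_complexity_2025}), which gives $\ell_{\boldsymbol{\Delta}}(G/N)\leq\alpha\Leftrightarrow\ell_{\mathrm{Ph}}(G/N)\leq\alpha$ for every $\alpha\in\omega_1^{\mathrm{Ph}}$, with a rigidity fact for non-Archimedean $N$. Since both lengths are the least elements of $\omega_1^{\mathrm{Ph}}$ satisfying the respective inequalities, that equivalence immediately gives $\ell_{\boldsymbol{\Delta}}(G/N)=\ell_{\mathrm{Ph}}(G/N)$. What remains is to exclude values of the form $\alpha+1/2+\varepsilon$. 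By the remark that $\Gamma_{\sigma+1/2}=\Gamma_{\sigma+1/2+\varepsilon}$ for successor $\sigma$, the Borel class already lies in $\omega_1^{\boldsymbol{\Delta}}$. So I only need to rule out $\lambda+1/2+\varepsilon$ with $\lambda$ zero or limit.

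Suppose $\ell_{\mathrm{Ph}}(G/N)=\lambda+1/2+\varepsilon$. Pass to $H/N:=\mathrm{Ph}^{\lambda}(G/N)=G_\lambda/N$. Here $H$ is Polishable, $N$ is still non-Archimedean, and by definition $N$ is $D(\boldsymbol{\Pi}^0_2)$ but not $\boldsymbol{\Sigma}^0_2$ in $H$. The key input is the Hjorth dichotomy \cite{hjorth_borel_1998} for orbit equivalence relations of non-Archimedean Polish group actions, applied to the coset equivalence relation of $N$ acting on $H$ by translation. In the form used in \cite[Theorem 3.3]{lupini_complexity_2025}, it says that if a single orbit, here $N$ itself, is $\boldsymbol{\Pi}^0_3$ (in particular $D(\boldsymbol{\Pi}^0_2)$), then it is either $\boldsymbol{\Pi}^0_2$ or $\boldsymbol{\Sigma}^0_2$. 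An orbit of a Polish group action is $\boldsymbol{\Pi}^0_2$ exactly when it is closed-in-its-closure, which for a Polishable subgroup means $N$ is closed in $H$.

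It then remains to check that each alternative contradicts the assumed value. If $N$ is $\boldsymbol{\Sigma}^0_2$ in $H$, then $\ell_{\mathrm{Ph}}\le\lambda+1/2$ by definition. If $N$ is $\boldsymbol{\Pi}^0_2$, hence closed, in $H$, then $\mathrm{Ph}^{\lambda+1}(G/N)$, the least subspace of Borel class at most $\lambda+1$, is trivial, so $\ell_{\mathrm{Ph}}\le\lambda+1$. In fact a closed $N$ in $H$ is also $\boldsymbol{\Sigma}^0_2$, giving $\le\lambda+1/2$ directly. Either way the value $\lambda+1/2+\varepsilon$ is impossible, so $\ell_{\mathrm{Ph}}(G/N)\in\omega_1[1/2]$.

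The main obstacle is the Hjorth step. I would need to verify that, for a non-Archimedean Polish $N$ acting on the Polish group $H$ by translation, a $D(\boldsymbol{\Pi}^0_2)$ orbit that is not $\boldsymbol{\Sigma}^0_2$ cannot exist. The approach I would try first is Hjorth's argument: work in the closure $\overline{N}$. If $N$ is not $\boldsymbol{\Sigma}^0_2$, then $N$ is comeager nowhere in any relatively open piece. Using a countable basis of open subgroups of $N$, one then builds a Cantor scheme of cosets showing $N$ is $\boldsymbol{\Pi}^0_3$-hard, contradicting $D(\boldsymbol{\Pi}^0_2)$. Rather than reprove this, I would cite it as in \cite[Theorem 3.3]{lupini_complexity_2025}.
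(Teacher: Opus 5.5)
Your route matches the paper's: \cite[Theorem 6.1]{lupini_complexity_2025} gives $\ell_{\boldsymbol{\Delta}}(G/N)=\ell_{\mathrm{Ph}}(G/N)$, and the values $\lambda+1/2+\varepsilon$ are excluded using \cite{hjorth_borel_1998} together with \cite[Theorem 3.3]{lupini_complexity_2025}. The problem is that you state the key input in a form that is false, so as written your exclusion step rests on a false lemma.

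First, a non-Archimedean Polishable subgroup that is $\boldsymbol{\Pi}^0_3$ need not be $\boldsymbol{\Pi}^0_2$ or $\boldsymbol{\Sigma}^0_2$. Take $N=\prod_m\bigoplus_n\mathbb{Z}/2\mathbb{Z}$ with the product of the discrete topologies. It is a dense Polishable subgroup of $(\mathbb{Z}/2\mathbb{Z})^{\mathbb{N}\times\mathbb{N}}$ and is the standard $\boldsymbol{\Pi}^0_3$-complete set. Its phantom length is $1$, a value the paper itself realizes. Second, even restricted to $D(\boldsymbol{\Pi}^0_2)$, there is no single-orbit dichotomy for non-Archimedean actions. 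Let $\mathbb{Z}\times\mathbb{Z}^{\mathbb{N}}$ act on $2^{\mathbb{N}}\times(\mathbb{Z}\cup\{\infty\})^{\mathbb{N}}$ by the odometer on the first factor and by coordinatewise translation (fixing $\infty$) on the second. The orbit of $(0,0)$ is a countable dense set times $\mathbb{Z}^{\mathbb{N}}$. It is the intersection of an $F_\sigma$ and a $G_\delta$ set, but by taking sections it is neither $\boldsymbol{\Sigma}^0_2$ nor $\boldsymbol{\Pi}^0_2$.

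What \cite{hjorth_borel_1998} actually provides is a restriction on the \emph{potential} classes of the whole orbit equivalence relation induced by an action of a non-Archimedean Polish group. In particular, $D(\boldsymbol{\Pi}^0_2)$ and $D(\boldsymbol{\Pi}^0_{\lambda+1})$, for $\lambda$ limit, are excluded. The role of \cite[Theorem 3.3]{lupini_complexity_2025} is to transfer information about the potential class of the coset equivalence relation of $N$ on $G$ back to the Borel class of $N$ in $G$. That transfer relies on every orbit being a translate of $N$, which is exactly what your single-orbit formulation throws away. With this replacement the argument goes through, either in $G$ at level $D(\boldsymbol{\Pi}^0_{1+\lambda+1})$ or, as you do, in $G_\lambda$ at level $D(\boldsymbol{\Pi}^0_2)$.

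The assertion in your last paragraph, that a $D(\boldsymbol{\Pi}^0_2)$ non-$\boldsymbol{\Sigma}^0_2$ orbit cannot occur for the translation action, is the correct one. However, your Cantor-scheme sketch would not prove it. Its starting point, that $N$ is nowhere comeager in $\overline{N}$, holds for every non-closed Polishable subgroup, including countable dense ones, which are $\boldsymbol{\Sigma}^0_2$. Finally, ``closed in its closure'' is vacuous. What you need is that a $\boldsymbol{\Pi}^0_2$ Polishable subgroup is Polish in the relative topology and hence closed.
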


\subsection{Homomorphisms}

Suppose that $G/N$ and $H/M$ are homogeneous spaces with a Polish cover. A
function $f:G/N\rightarrow H/M$ is \emph{Borel-definable} if it admits a 
\emph{Borel lift} $\varphi :G\rightarrow H$. By definition, this means that $%
\varphi $ is a Borel function such that, for every $g\in G$, $\varphi \left(
g\right) M=f\left( gN\right) $. It is shown in \cite%
{bergfalk_definable_2024-1} that if $f$ is a Borel-definable bijection, then
its inverse is also Borel-definable.

It follows from \cite[Theorem 3.3]{lupini_complexity_2025} that if $%
f:G/N\rightarrow H/M$ is an \emph{injective }Borel-definable function, then
for $\alpha \in \omega _{1}\setminus \left\{ 1/2\right\} $:%
\begin{equation*}
\ell _{\boldsymbol{\Delta }}\left( G/M\right) =\alpha \Rightarrow \ell _{%
\boldsymbol{\Delta }}\left( G/N\right) \leq \alpha \text{.}
\end{equation*}

Thus, if $f$ is a \emph{bijective }Borel-definable function, for $\alpha \in
\omega _{1}\setminus \left\{ 1/2,1/2+\varepsilon \right\} $:

\begin{itemize}
\item $\ell _{\boldsymbol{\Delta }}\left( H/M\right) =\alpha \Leftrightarrow
\ell _{\boldsymbol{\Delta }}\left( G/N\right) =\alpha $;

\item $\ell _{\boldsymbol{\Delta }}\left( H/M\right) \in \left\{
1/2,1/2+\varepsilon \right\} \Leftrightarrow \ell _{\boldsymbol{\Delta }%
}\left( G/N\right) \in \left\{ 1/2,1/2+\varepsilon \right\} $.
\end{itemize}

When $N$ and $M$ are non-Archimedean Polish groups, the phantom lengths $%
\alpha +1/2+\varepsilon $ for some countable ordinal $\alpha $ cannot occur.
Thus the Borel length and phantom length coincide, and we have the following:

\begin{proposition}
Let $G/N$ and $H/M$ be homogeneous spaces with a Polish cover, where $N$ and 
$M$ are non-Archimedean Polish groups. Let $\varphi :G/N\rightarrow H/M$ be
a Borel-definable function. Then:

\begin{enumerate}
\item if $\varphi $ is injective, then $\ell _{\mathrm{Ph}}\left( G/N\right)
\leq \ell _{\mathrm{Ph}}\left( H/M\right) $;

\item if $\varphi $ is bijective, then $\ell _{\mathrm{Ph}}\left( G/N\right)
=\ell _{\mathrm{Ph}}\left( H/M\right) $.
\end{enumerate}
\end{proposition}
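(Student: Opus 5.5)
The plan is to derive the Proposition directly from the two facts recorded just before it: the non-Archimedean identification of Borel length with phantom length, and the transfer of Borel length along injective Borel-definable maps coming from \cite[Theorem 3.3]{lupini_complexity_2025}.

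\emph{Step 1: identify the two lengths.} Since $N$ and $M$ are non-Archimedean Polish groups, the preceding Theorem gives
\begin{equation*}
\ell _{\boldsymbol{\Delta }}\left( G/N\right) =\ell _{\mathrm{Ph}}\left( G/N\right) \in \omega _{1}[1/2]
\quad\text{and}\quad
\ell _{\boldsymbol{\Delta }}\left( H/M\right) =\ell _{\mathrm{Ph}}\left( H/M\right) \in \omega _{1}[1/2]\text{.}
\end{equation*}
Hence it suffices to prove both items for $\ell _{\boldsymbol{\Delta }}$, and only values in $\omega _{1}[1/2]$ occur. In particular, values of the form $\alpha +1/2+\varepsilon$ never appear.

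\emph{Step 2: item (1).} Let $\alpha =\ell _{\boldsymbol{\Delta }}\left( H/M\right)$. If $\alpha \neq 1/2$, the displayed consequence of \cite[Theorem 3.3]{lupini_complexity_2025} for injective Borel-definable maps gives $\ell _{\boldsymbol{\Delta }}\left( G/N\right) \leq \alpha$ directly. (The display there is written with $G/M$; I read it as $H/M$.)

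The only case not covered by that display is $\alpha =1/2$, and this is the step I expect to need care. I would handle it by going back to the proof of \cite[Theorem 3.3]{lupini_complexity_2025}. That argument pulls back $M$ along a Borel lift $\varphi $ and realizes $N$ as a Borel preimage of $M$, so the complexity bound transfers.

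For $\alpha =1/2$, the set $M$ is $\boldsymbol{\Sigma }_{2}^{0}$ in $H$. Then $\varphi^{-1}(M)$ is Borel and contains $N$, and it is a union of $N$-cosets. Injectivity of $f$ makes $\varphi^{-1}(M)$ exactly $N$. So $N$ is the preimage of a $\boldsymbol{\Sigma }_{2}^{0}$ set under a Borel map.

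A Borel preimage alone does not preserve the class. The way to recover it is the standard fact, used in \cite{lupini_complexity_2025}, that the Borel lift can be taken continuous on a suitable Polish subgroup or comeager set. Combining this with the Solecki-type dichotomy, which says $N$ is $\boldsymbol{\Sigma }_{2}^{0}$ precisely when it is not comeager in its closure in a strong sense, yields $\ell _{\boldsymbol{\Delta }}(G/N)\leq 1/2$.

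\emph{Fallback for Step 2.} If this route is awkward, I would argue by contradiction. Suppose $\ell _{\boldsymbol{\Delta }}(G/N)>1/2$. By Step 1 its value lies in $\omega _{1}[1/2]$, so it is at least $1$. Applying the $\alpha =1$ case of the display to $f$ would force $\ell _{\boldsymbol{\Delta }}(H/M)\geq \ell _{\boldsymbol{\Delta }}(G/N)$, and it then remains to rule out $\ell _{\boldsymbol{\Delta }}(G/N)\geq 1>1/2=\ell _{\boldsymbol{\Delta }}(H/M)$. The obstruction is that the excluded value $1/2+\varepsilon $ sits between $1/2$ and $1$. That value is not realized for non-Archimedean $N$, so the gap closes.

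\emph{Step 3: item (2).} If $f$ is bijective, its inverse is Borel-definable by \cite{bergfalk_definable_2024-1}, and it is injective. Applying item (1) to $f$ and to $f^{-1}$ gives both inequalities, hence equality.

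This can also be read off the bijective bullet points recorded above. Those bullets identify the lengths except when they lie in $\{1/2,1/2+\varepsilon \}$, and Step 1 excludes $1/2+\varepsilon $, so both lengths equal $1/2$ in that case as well.
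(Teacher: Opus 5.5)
Step 1, the case $\alpha\neq 1/2$ of Step 2, and your second version of Step 3 (the bijective bullets plus the exclusion of $1/2+\varepsilon$) match the paper's argument. The gap is item (1) when $\ell_{\boldsymbol{\Delta}}(H/M)=1/2$. This is the one place where the non-Archimedean hypothesis has to do real work, and neither of your arguments makes it do that work.

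Your primary route tries to push $\boldsymbol{\Sigma}^0_2$ back along the Borel lift, using partial continuity of $\varphi$ and a Solecki-type criterion. It never uses that $N$ or $M$ is non-Archimedean. If it worked, it would give the $\boldsymbol{\Sigma}^0_2$ transfer for arbitrary Polish covers, which is exactly the case the quoted transfer result leaves out. Continuity of $\varphi$ on a comeager set or on a Polish subgroup does not by itself make $\varphi^{-1}(M)$ a $\boldsymbol{\Sigma}^0_2$ subset of $G$. The criterion you quote is also not a valid characterization, since a proper dense Polishable subgroup is always meager in its closure.

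Your fallback is a non sequitur. The display at $\alpha=1$ says nothing when $\ell_{\boldsymbol{\Delta}}(H/M)=1/2$. Even granting $\ell_{\boldsymbol{\Delta}}(G/N)\le 1$, knowing that $1/2+\varepsilon$ is not attained does not exclude $\ell_{\boldsymbol{\Delta}}(G/N)=1$.

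The missing step is to apply the transfer at $\alpha=1/2+\varepsilon$, read as a statement about classes: for $\alpha\neq 1/2$, $M\in\Gamma_{\alpha}(H)$ implies $N\in\Gamma_{\alpha}(G)$. This monotone form is what the paper relies on. Read literally, with hypothesis $\ell_{\boldsymbol{\Delta}}(H/M)=\alpha$ for some $\alpha\neq 1/2$, the display never applies when $\ell_{\boldsymbol{\Delta}}(H/M)=1/2$.

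The argument then runs as follows. Since $\boldsymbol{\Sigma}^0_2\subseteq D(\boldsymbol{\Pi}^0_2)=\Gamma_{1/2+\varepsilon}$, from $M\in\boldsymbol{\Sigma}^0_2(H)$ you get $N\in D(\boldsymbol{\Pi}^0_2)(G)$, that is, $\ell_{\boldsymbol{\Delta}}(G/N)\le 1/2+\varepsilon$. Only now does non-Archimedeanity of $N$ enter: since $\ell_{\boldsymbol{\Delta}}(G/N)\in\omega_1[1/2]$, this forces $\ell_{\boldsymbol{\Delta}}(G/N)\le 1/2$. With this step, item (1) holds, and your first version of Step 3 (applying (1) to $f$ and to $f^{-1}$) also goes through.
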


\subsection{The left heart}

Let $R$ be a Polish ring. Denote by $\mathbf{PMod}\left( R\right) $ the
category of \emph{Polish }topological $R$-modules. Then $\mathbf{PMod}\left(
R\right) $ is a \emph{quasi-abelian }category in the sense of \cite%
{schneiders_quasi-abelian_1999}. As such, it admits a \emph{left heart }$%
\mathrm{LH}\left( \mathbf{PMod}\left( R\right) \right) $, which is an
abelian category uniquely characterized by a suitable universal property;
see \cite{schneiders_quasi-abelian_1999}. Following \cite%
{lupini_looking_2024}, one can describe $\mathrm{LH}\left( \mathbf{PMod}%
\left( R\right) \right) $ as the category of $R$-modules with a Polish
cover. These are precisely the groups with a Polish cover $G/N$ where both $%
N $ and $G$ are Polish $R$-modules, and the inclusion $N\rightarrow G$ is an 
$R $-module homomorphism (an $R$-homomorphism). The morphisms $%
G/N\rightarrow H/M$ in $\mathrm{LH}\left( \mathbf{PMod}\left( R\right)
\right) $ are precisely the \emph{Borel-definable }$R$-homomorphisms, i.e.,
the Borel-definable group homomorphisms that are also $R$-module
homomorphisms.

Let $\mathcal{B}$ be a full subcategory of $\mathbf{PMod}\left( R\right) $
that is closed under taking closed submodules and quotients by closed
submodules. Define a module with a cover in $\mathcal{B}$ to be a module
with a Polish cover of the form $G/N$ where both $G$ and $N$ are isomorphic
to objects of $\mathcal{B}$.

If $\mathcal{B}$ is also a quasi-abelian category, the inclusion $\mathcal{B}%
\rightarrow \mathbf{PMod}\left( R\right) $ preserves finite limits and
finite colimits, and $\mathcal{B}$ is essentially closed under extensions in 
$\mathbf{PMod}\left( R\right) $, then its left heart $\mathrm{LH}\left( 
\mathcal{B}\right) $ is (equivalent to) the full subcategory of $\mathrm{LH}%
\left( \mathbf{PMod}\left( R\right) \right) $ spanned by the modules with a
cover in $\mathcal{B}$; see \cite[Section 6]{lupini_looking_2024}.

For convenience of notation, we introduce the following terminology.

\begin{definition}
\label{Definition:phantom-spectrum}Let $R$ be a Polish ring. Let $\mathcal{B}
$ be a full subcategory of the category $\mathbf{PMod}\left( R\right) $ of
Polish modules over $R$ that is closed under taking closed submodules and
quotients by closed submodules. The \emph{phantom spectrum} $\sigma _{%
\mathrm{Ph}}\left( \mathcal{B}\right) $ of $\mathcal{B}$ is the subset of $%
\omega _{1}^{\mathrm{Ph}}$ comprising the phantom lengths of modules with a
cover in $\mathcal{B}$.
\end{definition}

\section{Non-Archimedean Banach and Fr\'echet spaces\label%
{Section:non-Archimedean}}

We recall some fundamental notions from the theory of valued fields and
non-Archimedean functional analysis, as can be found in \cite%
{bosch_non-archimedean_1984,schneider_nonarchimedean_2002,van_rooij_non-archimedean_1978,perez-garcia_locally_2010,groenewegen_spaces_2016}%
.

\subsection{Non-Archimedean Fr\'echet spaces}

Throughout the paper, $K$ is a field with a complete non-Archimedean
absolute value $|\cdot |$ that renders $K$ a Polish space. All vector spaces
are assumed to be over $K$, and all linear maps are assumed to be $K$%
-linear. The valuation is \emph{nontrivial}, meaning that $\left\vert \pi
\right\vert \neq 1$ for some nonzero $\pi \in K$. After replacing $\pi $ by
its inverse if necessary, we can assume that $0<\left\vert \pi \right\vert
<1 $. One then sets 
\begin{equation*}
\mathcal{O}_{K}:=\{a\in K:\left\vert a\right\vert \leq 1\}
\end{equation*}%
which is the valuation ring associated with the given valuation. We consider 
$K$ as a topological field with respect to the topology induced by this
absolute value. Notice that%
\begin{equation*}
K=\bigcup_{n\in \mathbb{N}}\pi ^{-n}\mathcal{O}_{K}\text{.}
\end{equation*}

Let $X$ be a \emph{Polish} topological vector space. Notice that, in
particular, $X$ is a topological $\mathcal{O}_{K}$-module. A subset $C$ of $%
X $ is called \emph{absolutely convex }if it is an $\mathcal{O}_{K}$%
-submodule of $X$, and an \emph{open lattice }if it is absolutely convex and
open. A (non-Archimedean) separable Fr\'{e}chet space is a Polish
topological vector space that admits a zero neighborhood basis consisting of
open lattices. Equivalently, $X$ is a Polish topological vector space whose
topology is induced by a countable family of (non-Archimedean) seminorms.

Let $X$ be a separable Fr\'{e}chet space. A subset $A$ of $X$ is \emph{%
bounded }if for every zero neighborhood $U$ of $X$ there exists a zero
neighborhood $V$ of $K$ such that $V\cdot A\subseteq U$; see \cite%
{weiss_boundedness_1956}. This is equivalent to the assertion that for every
zero neighborhood $U$ of $X$ there exists $\lambda \in K$ such that $%
A\subseteq \lambda U$ \cite[Definition 3.6.1]{perez-garcia_locally_2010}. A
separable Banach space is a separable Fr\'{e}chet space that is \emph{%
locally bounded}, i.e., it admits a bounded zero neighborhood. This is
equivalent to the assertion that $X$ is a Polish topological vector space
whose topology is induced by a (non-Archimedean) norm; see \cite[Definition
2.1.1]{perez-garcia_locally_2010}, \cite[Theorem 3.6.2]%
{perez-garcia_locally_2010}.

In what follows, all Fr\'{e}chet and Banach spaces are assumed to be \emph{%
separable}.

\subsection{Fr\'{e}chet subspaces}

Suppose that $X$ is a Fr\'echet space. A Fr\'echet subspace of $X$ is a
vector subspace $Y$ that is endowed with a (necessarily unique) Fr\'{e}chet
space topology that makes the inclusion $Y\rightarrow X$ continuous.
Likewise, $Y$ is a Banach subspace of $X$ if it is endowed with a
(necessarily unique) Banach space topology that makes the inclusion $%
Y\rightarrow X$ continuous.

As in \cite{lupini_looking_2024}, one defines a space with a Fr\'echet cover
to be $X/N$ where $X$ is a Fr\'echet space and $N\subseteq X$ is a Fr\'echet
subspace. A subspace with a Fr\'echet cover of $X/N$ is a subspace $Y/N$
where $Y$ is a Fr\'echet subspace of $X$ containing $N$. For $\alpha <\omega
_{1}$, $\mathrm{Ph}^{\alpha }\left( X/N\right) $ is a subspace with a
Fr\'echet cover. A \emph{phantom Fr\'echet space }is a space with a
Fr\'echet cover $X/N$ with $N$ dense in $X$.

Analogous definitions apply to Banach spaces rather than Fr\'echet spaces.
If $X/N$ is a space with a Fr\'echet cover where $N$ is a Banach space, then
for every $\alpha <\omega _{1}$, $\mathrm{Ph}^{\alpha +1}\left( X/N\right) $
is a space with a Banach cover.

\subsection{Operator ranges}

Let $X,Y$ be Fr\'{e}chet spaces. Let $T:X\rightarrow Y$ be a continuous
linear map. Then its image or range $\mathrm{Ran}\left( T\right) \subseteq Y$
is a Fr\'{e}chet subspace of $Y$, isomorphic to the quotient of $X$ by the
closed subspace $\mathrm{Ker}\left( T\right) $. Conversely, every Fr\'{e}%
chet subspace $Z$ of $Y$ is the range of the continuous linear operator $%
Z\rightarrow Y$ given by the inclusion. Thus, the problem of classifying the
possible Borel complexity classes of ranges of continuous linear maps
between Fr\'{e}chet spaces (operator ranges) is equivalent to the problem of
classifying the possible \emph{phantom lengths }of spaces with a Fr\'{e}chet
cover, or even just \emph{phantom Fr\'{e}chet spaces}.

In terms of Definition \ref{Definition:phantom-spectrum}, one can
reformulate the statement of Theorem \ref{Theorem:main} as follows. Let $K$
be a Polish non-Archimedean non-trivially valued field. We denote by $%
\mathbf{Ban}\left( K\right) $ the full subcategory of $\mathbf{PMod}\left(
K\right) $ comprising the separable Banach spaces. Likewise, $\mathbf{Fre}%
\left( K\right) $ is the full subcategory of $\mathbf{PMod}\left( K\right) $
comprising the separable Fr\'{e}chet spaces. Notice that $\mathbf{Fre}\left(
K\right) $ consists precisely of the limits of towers of separable Banach
spaces with continuous linear maps with dense image as bonding maps. Indeed,
for a separable Fr\'{e}chet space $X$, choose an increasing sequence $\left(
p_{n}\right) $ of seminorms defining its topology, and let $X_{n}$ be the
Banach completion of $X/\mathrm{Ker}\left( p_{n}\right) $. The canonical map 
$X\rightarrow \varprojlim_{n}X_{n}$ is a topological isomorphism.
Conversely, a projective limit of a countable tower of separable Banach
spaces is a separable Fr\'{e}chet space; see \cite[Definition 3.4.29 and
Corollary 3.5.7]{perez-garcia_locally_2010}.

\begin{theorem}
\label{Theorem:spectra}Let $K$ be a Polish non-Archimedean non-trivially
valued field. Then:

\begin{enumerate}
\item the phantom spectrum $\sigma _{\mathrm{Ph}}\left( \mathbf{Ban}\left(
K\right) \right) $ of $\mathbf{Ban}\left( K\right) $ is the set of countable
zero or successor ordinals;

\item the phantom spectrum $\sigma _{\mathrm{Ph}}\left( \mathbf{Fre}\left(
K\right) \right) $ of $\mathbf{Fre}\left( K\right) $ is the set of countable
ordinals.
\end{enumerate}
\end{theorem}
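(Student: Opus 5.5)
The proof splits into two halves: upper bounds (rigidity) and realizations (lower bounds). For the upper bounds, the space $N$ in any space with a Fr\'echet cover $X/N$ is a non-Archimedean Polish group, because its open lattices are open $\mathcal{O}_K$-submodules and in particular open subgroups. So by the theorem on non-Archimedean $N$ recalled above, the phantom length lies in $\omega_1[1/2]$. It then suffices to exclude every value $\alpha+1/2$.

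\textbf{Excluding $\alpha + 1/2$.} Replacing $X/N$ by $\mathrm{Ph}^{\alpha}(X/N)$, which is again a space with a Fr\'echet cover, we are reduced to showing that if $\{\ast\}$ is $\boldsymbol{\Sigma}^0_2$ in a space with a Fr\'echet cover $X/N$, then $N$ is closed. Write $N=\bigcup_k F_k$ with $F_k$ closed in $X$. By Baire category in the Polish topology of $N$, some $F_k\cap N$ has nonempty interior in $N$. Hence the closure $\overline{N}$ (taken in $X$) is contained in $F_k+N\subseteq N$ on a neighborhood: more precisely, using an open lattice $U$ of $N$ with $x+U\subseteq F_k$, one gets $\overline{U}^X\subseteq F_k-x\subseteq N$. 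Scaling by $\pi^{-n}$ exhausts $N$, so $\overline{N}^X=\bigcup_n \pi^{-n}\overline{U}^X$. This shows $N=\overline{N}$, provided $\overline{U}^X$ is absorbing in $\overline{N}$. That absorbing property is the main point to check. I would handle it through the standard closed-graph argument: $\overline{U}$ is a closed $\mathcal{O}_K$-lattice in $\overline N$, it is countably many translates-covering by separability of $N$ together with the identity $N=\bigcup_n\pi^{-n}U$, and Baire in $\overline N$ then makes it open in $\overline{N}$. This rules out every half-integer length.

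\textbf{Excluding limit lengths for Banach covers.} In the Banach case we must also exclude limit ordinals $\lambda$. Suppose $\ell_{\mathrm{Ph}}(X/N)=\lambda$ is a limit with $N$ Banach. Then $\mathrm{Ph}^{\beta}(X/N)$ strictly decreases for $\beta<\lambda$, with $\mathrm{Ph}^{\lambda}=\{\ast\}$. Following the Archimedean argument in \cite{lupini_complexity_2025}, I would use that $\mathrm{Ph}^{1}(X/N)=\overline{N}/N$ and that each $\mathrm{Ph}^{\beta+1}$ carries a Banach cover (as stated above). The key claim is $\mathrm{Ph}^{\lambda}=\bigcap_{\beta<\lambda}\mathrm{Ph}^\beta$ in the sense of Solecki's construction. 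Local boundedness of $N$ then forces the intersection of a decreasing $\omega$-sequence of Banach subspaces containing the Banach space $N$ to stabilize when it equals $N$: the norm-ball of $N$ is bounded in each, and the Baire-category argument above shows $N$ has bounded index at some stage. This stabilization step is the main obstacle, and I would model it directly on the corresponding lemma in \cite{lupini_complexity_2025}, replacing balls by lattices.

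\textbf{Realizations.} For the constructions, I would use the bush topology on well-founded trees (Section~\ref{Section:Trees}). For a well-founded tree $T$ of rank $\alpha$, take $X=c_0$-type or product spaces $K^{T}$ with weighted non-Archimedean norms, and let $N$ be defined by summability conditions along the tree. The claim is that $\mathrm{Ph}^\beta$ corresponds to the subtree of rank $\ge\beta$, which gives phantom length exactly $\alpha$. Successor ranks admit a Banach $N$, using $c_0(T,w)$ with weights $w$ chosen via powers of $\pi$. Limit ranks use a countable projective limit of such Banach spaces, which produces a Fr\'echet space. Since $K$ is nontrivially valued, the powers $\pi^n$ play the role of the scalars $2^{-n}$ in the Archimedean constructions, so the computations transfer.
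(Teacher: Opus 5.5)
Your overall plan matches the paper's. The length lies in $\omega_1[1/2]$ because $N$ is non-Archimedean. Passing to $\mathrm{Ph}^{\alpha}$ reduces the half-integer case to showing that a $\boldsymbol{\Sigma}^0_2$ Fr\'echet subspace is closed. Limits are then excluded for Banach covers, and realizations come from bushes and products. Your $B:=\overline{U}^X$ is also exactly the paper's starting object: an $\mathcal{O}_K$-lattice contained in $N$, open in $N$, closed in $X$, with $N=KB$.

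\textbf{The gap.} The problem is how you conclude that $KB$ is closed. Translates of $\overline{U}\subseteq N$ only ever cover $N$, never $\overline{N}$. So Baire category in $\overline{N}$ yields nothing unless you already know $N$ is non-meager in $\overline{N}$. For a Polishable subgroup, that is equivalent to the closedness you are trying to prove. Nothing in your step uses the non-Archimedean structure, and the same reasoning over $\mathbb{R}$ would show $\ell^1$ is closed in $\ell^2$. The closed $\ell^1$-ball is closed in $\ell^2$ and countably many translates of it cover $\ell^1$, yet $\ell^1$ is a dense proper $\boldsymbol{\Sigma}^0_2$ subspace of $\ell^2$.

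\textbf{The missing ingredient} comes in three steps.

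\emph{Step 1.} Apply Baire inside $B$ itself, which is Polish as a closed subset of $X$. Since $\pi B$ is open in the separable space $N$, $B$ is the union of countably many cosets $b+\pi B$, each closed in $X$. Hence the subgroup $\pi B$ is relatively open in $B$. This has no real analogue, since $\tfrac12 B$ is not a subgroup.

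\emph{Step 2.} Show $B$ is relatively open in $KB$. Suppose $z_n\in KB\setminus B$ with $z_n\to 0$ in $X$, and take $j_n\geq 1$ least with $c_n:=\pi^{j_n}z_n\in B$. Minimality gives $c_n\notin\pi B$. On the other hand $p(c_n)\leq p(z_n)\to 0$ for every continuous seminorm $p$, contradicting openness of $\pi B$ in $B$.

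\emph{Step 3.} Since $B$ is closed in $X$ and relatively open in the subgroup $KB$, a Cauchy sequence in $KB$ eventually lies in a single coset $x_{n_0}+B$. Therefore $KB=N$ is closed. Steps 1--3 are Proposition~\ref{Proposition:closed-lattice} and Lemma~\ref{Lemma:length-less-than-1}.

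\textbf{The limit step.} Your limit-exclusion sketch aims at the wrong mechanism. No Baire argument is needed, and the relevant fact is not that the ball of $N$ is bounded in each stage. What matters is that $N=X_\lambda$ carries the projective limit topology of the decreasing $X_\beta$ ($\beta<\lambda$). So the bounded lattice $U$ of $N$ contains $W\cap N$ for a zero neighborhood $W$ of a \emph{single} $X_\beta$. Then $\pi^nW\cap N\subseteq\pi^nU$ shows $N$ has the subspace topology of $X_\beta$. Hence $N$ is closed and dense in $X_\beta$, so $N=X_\beta$ with $\beta<\lambda$.
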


\subsection{Rigidity}

We begin by studying the value $1/2$ for the phantom length of phantom
Fr\'echet spaces and show that it cannot occur.

For an $\mathcal{O}_{K}$-submodule $C$ of a $K$-vector space, define 
\begin{equation*}
KC=\left\{ \lambda x:x\in C\text{ and }\lambda \in K\right\} \text{,}
\end{equation*}%
which is the $K$-linear span of $C$. Powers of $\pi $ absorb every scalar,
and therefore 
\begin{equation}
KC=\bigcup_{j<\omega }\pi ^{-j}C.  \label{eq:linear-hull}
\end{equation}

\begin{proposition}
\label{Proposition:closed-lattice} Let $Y$ be a non-Archimedean Fr\'{e}chet
space over $K$, and let $C\subseteq Y$ be a closed $\mathcal{O}_{K}$%
-submodule. If $\pi C$ is open in $C$, then $KC$ is closed in $Y$.
\end{proposition}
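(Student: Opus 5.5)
The plan is to show that $KC$ is a Fr\'echet subspace of $Y$ in which $C$ is an open lattice, and that the closure $\overline{KC}$ satisfies the hypotheses of the open mapping theorem. Concretely, I expect to show that $KC$ with a suitable topology is a \emph{closed} subset of $Y$ by a direct argument, since the naive open mapping route only yields Polishability.

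Since $\pi C$ is open in $C$ (relative topology from $Y$), there is an open lattice $U$ of $Y$ with $U\cap C\subseteq \pi C$. I then argue that $U\cap KC\subseteq C$. Let $x\in U\cap KC$. By \eqref{eq:linear-hull}, $x\in\pi^{-j}C$ for some least $j\ge 0$. If $j\geq 1$, then $\pi^{j-1}x\in \pi^{-1}C$. I may shrink $U$ so that it is an $\mathcal{O}_K$-module, which gives $\pi^{j-1}x\in U$. This does not immediately give $\pi^{j-1}x\in C$, because we only know $\pi^{j}x \in C$, so I induct downward instead. Take $y=\pi^{j-1}x$, so that $\pi y\in C$ and $y\in U$. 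To conclude $y\in C$, I would like $y\in C$ from $\pi y\in C\cap \pi U$. The better route is to use the hypothesis in the form $C\cap U\subseteq \pi C$ iterated: $C\cap \pi^k U\subseteq \pi^{k+1}C$. This holds by induction, since if $z\in C\cap \pi^{k}U$ then $z=\pi c$ with $c\in C$, and $c=\pi^{-1}z\in \pi^{k-1}U\cap C$. Now for $y$ with $\pi y\in C$ and $y\in U$, we have $\pi y\in C\cap\pi U\subseteq \pi^2 C$, so $y\in\pi C\subseteq C$. Iterating $j$ times gives $x\in C$. Hence $U\cap KC\subseteq C$, and in fact $U\cap KC=U\cap C$.

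With this in hand, closedness follows. Suppose $x\in\overline{KC}$. Pick $n$ with $x\in\pi^{-n}U$, for instance $n=0$ after scaling, noting that $KC$ is $K$-invariant and $U$ is absorbing because it is open. So I may assume $x\in U$. Then $x$ is a limit of elements $x_i\in KC$, and since $U$ is open these eventually lie in $U\cap KC=U\cap C\subseteq C$. As $C$ is closed, $x\in C\subseteq KC$. Therefore $KC$ is closed.

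The main obstacle is the step $U\cap KC\subseteq C$, specifically obtaining the iterated inclusion $C\cap\pi^kU\subseteq\pi^{k+1}C$ with $U$ an $\mathcal{O}_K$-submodule. Once that is established, the remaining steps are routine.
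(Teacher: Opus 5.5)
Your proof is correct and follows the paper's two-step structure. First you show that $C$ is relatively open in $KC$: you take an open lattice $U$ with $U\cap C\subseteq \pi C$ and prove $U\cap KC\subseteq C$ directly, while the paper argues by contradiction with rescaled elements $c_n=\pi^{j_n}z_n\in C\setminus\pi C$ tending to $0$. Second, you use closedness of $C$ by rescaling a limit point into $U$, where the paper uses a Cauchy-sequence argument placing the limit in $C+x_{n_0}$. The iterated inclusion $C\cap\pi^kU\subseteq\pi^{k+1}C$ is unnecessary: if $j\geq 1$ is least with $\pi^jx\in C$, then $\pi^jx\in U\cap C\subseteq\pi C$ already gives $\pi^{j-1}x\in C$, contradicting minimality.
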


\begin{proof}
We claim that $C$ is \emph{open} in $KC$ for the relative topology inherited
from $Y$. Suppose that this is not the case. Thus, $KC\setminus C$ is not
closed in $KC$. Thus, there exists a sequence $\left( z_{n}\right) $ in $%
KC\setminus C$ that converges to $y\in C$. After replacing $z_{n}$ with $%
z_{n}-y$ we can assume that $y=0$. By \eqref{eq:linear-hull}, for every $%
n\in \mathbb{N}$ there is a \emph{least} $j_{n}<\omega $ such that $\pi
^{j_{n}}z_{n}\in C$. Since $z_{n}\notin C$, we must have $j_{n}\geq 1$. For $%
n\in \mathbb{N}$, define%
\begin{equation*}
c_{n}=\pi ^{j_{n}}z_{n}\in C\text{.}
\end{equation*}
Minimality of $j_{n}$ shows that $c_{n}$ does not belong to $\pi C$. We
claim that $\left( c_{n}\right) $ converges to $0$ in $C$ with respect to
the relative topology inherited from $Y$. Indeed, for each continuous
seminorm $p$ on $Y$, one has%
\begin{equation*}
p\left( c_{n}\right) =\left\vert \pi \right\vert ^{j_{n}}p\left(
z_{n}\right) \leq p\left( z_{n}\right) \text{.}
\end{equation*}

Thus, 
\begin{equation*}
\mathrm{lim}_{n}p\left( c_{n}\right) \leq \mathrm{lim}_{n}p\left(
z_{n}\right) =0\text{.}
\end{equation*}%
As this holds for every continuous seminorm $p$ on $Y$, and $Y$ is a
Fr\'echet space, we must have $\mathrm{lim}_{n}c_{n}=0$. Since by hypothesis 
$\pi C$ is open in $C$, and $0\in \pi C$, this implies that, for all but
finitely many $n\in \mathbb{N}$, $c_{n}\in \pi C$. This is a contradiction.

We therefore conclude that $C$ is open in $KC$. Suppose now that $y\in 
\overline{KC}^{Y}$. Then there exists a sequence $\left( x_{n}\right) $ in $%
KC$ that converges to $y$ in $Y$. Since $C$ is open in $KC$ with respect to
the subspace topology inherited from $Y$, there is a zero neighborhood $V$
in $Y$ such that $V\cap KC\subseteq C$. Since the sequence $\left(
x_{n}\right) $ converges in $Y$, it is Cauchy. Thus, there exists $n_{0}\in 
\mathbb{N}$ such that for all $n,m\geq n_{0}$, $x_{n}-x_{m}\in V$. Since $KC$
is a subspace of $Y$ and $x_{n}\in KC$ for every $n\in \mathbb{N}$, we also
have $x_{n}-x_{m}\in KC$ for every $n,m\geq n_{0}$. Thus, for all $n,m\geq
n_{0}$ we have%
\begin{equation*}
x_{n}-x_{m}\in V\cap KC\subseteq C\text{.}
\end{equation*}%
In particular,%
\begin{equation*}
x_{n}\in C+x_{n_{0}}
\end{equation*}%
for all $n\geq n_{0}$. Since $C$ is closed in $Y$ by hypothesis, we have%
\begin{equation*}
y=\mathrm{lim}_{n}x_{n}\in C+x_{n_{0}}\text{.}
\end{equation*}%
Thus, $y\in C+x_{n_{0}}\subseteq C+KC\subseteq KC$. As $y$ is an arbitrary
element of the closure of $KC$ in $Y$, this concludes the proof that $KC$ is
closed in $Y$.
\end{proof}

\begin{lemma}
\label{Lemma:length-less-than-1}Let $X/N$ be a phantom Fr\'{e}chet space. If 
$X/N$ has phantom length less than $1$, then it has phantom length $0$.
\end{lemma}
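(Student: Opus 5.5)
Phantom length less than $1$ means phantom length at most $1/2$. Since $N$ is a Fréchet space, it is non-Archimedean, so the only possibilities are $0$ and $1/2$. In the case $1/2$, the definition with $\alpha=0$ (where $\mathrm{Ph}^{0}(X/N)=X/N$) gives $N\in\boldsymbol{\Sigma}_{2}^{0}(X)$. My plan is to show that this forces $N$ to be closed in $X$; since $N$ is dense, we then get $N=X$, i.e.\ phantom length $0$.

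Write $N=\bigcup_{k}F_{k}$ with each $F_{k}$ closed in $X$. Each $F_{k}\cap N$ is closed in $N$ for the Polish topology of $N$, because the inclusion $N\to X$ is continuous. By the Baire category theorem in $N$, some $F_{k}$ has nonempty interior in $N$. Translating, $F_{k}-F_{k}$ contains an open lattice $U$ of $N$. I then want a single closed $\mathcal{O}_{K}$-submodule $C$ of $X$ containing $U$ and contained in $N$. The natural candidate is $C=\overline{U}^{X}$, and the task is to check that $C\subseteq N$.

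This is the step I expect to be the main obstacle. I would handle it as follows.
\begin{itemize}
\item First find a closed set of $X$ contained in $N$ that contains a neighborhood of $0$ in $N$. Since $U\subseteq F_{k}-F_{k}$, let $V$ be a smaller open lattice with $V\subseteq (F_{k}-x_{0})\cap N$ for a suitable $x_{0}\in F_{k}\cap N$.
\item Then $\overline{V}^{X}\subseteq F_{k}-x_{0}\subseteq N-x_{0}=N$.
\item The closure of an $\mathcal{O}_{K}$-submodule is again an $\mathcal{O}_{K}$-submodule, so $C:=\overline{V}^{X}$ is a closed $\mathcal{O}_{K}$-submodule of $X$ with $V\subseteq C\subseteq N$.
\end{itemize}

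Next I apply Proposition \ref{Proposition:closed-lattice} to $C$ in $Y=X$, which requires $\pi C$ to be open in $C$ for the topology of $X$. The first thing I would try is to replace the $X$-topology on $C$ by the $N$-topology, using the open mapping theorem. Consider $KC=\bigcup_{j}\pi^{-j}C$, a subspace of $N$ containing the open lattice $V$ of $N$; hence $KC$ is open in $N$ and therefore $KC=N$. So $N=\bigcup_{j}\pi^{-j}C$ with $C$ closed in $X$. If I can show $\pi C$ is relatively open in $C$ in $X$, the Proposition gives $N=KC$ closed, and we are done. If that direct check fails, I would fall back on the argument inside the Proposition's proof. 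The goal there is to show $C$ is open in $KC=N$ for the $X$-topology. Suppose not: then there is a sequence $z_{n}\to0$ in $X$ with $z_{n}\in N\setminus C$, and one rescales to $c_{n}=\pi^{j_{n}}z_{n}\in C\setminus\pi C$ with $c_{n}\to0$ in $X$. To derive a contradiction, I would shrink $C$ to a closed $X$-bounded piece. Choose $C$ inside $\overline{W}^{X}$ for a basic lattice $W$ of $X$ intersected with $V$, so that on $C$ the two topologies interact well. I would then use the Baire property of the closed subset $C$ of $X$, which is covered by the closed sets $\pi^{j}C$ (or translates), to get some $\pi^{j}C$ open in $C$; scaling then makes $\pi C$ open in $C$. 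This Baire-in-$C$ argument is the step I would work out most carefully.

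Once the Proposition applies, $N=KC$ is closed in $X$ and dense, so $N=X$. Then $\{\ast\}=X/N$ and the phantom length is $0$.
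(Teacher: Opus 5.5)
Your setup is the paper's: Baire category in $N$ applied to $N=\bigcup_k F_k$, the closed $\mathcal{O}_K$-submodule $C=\overline{V}^{X}\subseteq N$ (open in $N$ because $V\subseteq C$), the identity $KC=N$, and then Proposition~\ref{Proposition:closed-lattice}. The gap is the step you flag and leave open. You never verify the Proposition's hypothesis that $\pi C$ is open in $C$ for the topology inherited from $X$, and neither of your concrete suggestions supplies it. The Baire argument you sketch uses the wrong cover. The sets $\pi^{j}C$, $j\geq 0$, form a decreasing chain with $\bigcup_{j\geq 1}\pi^{j}C=\pi C$. So they cover $C$ only because $\pi^{0}C=C$ is among them, and Baire category then tells you nothing. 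A cover by translates helps only if it is countable, and you give no reason why it would be. Re-running the proof of the Proposition, or shrinking $C$ to an $X$-bounded piece, does not help either, because that proof reaches its contradiction precisely by using openness of $\pi C$ in $C$.

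The missing idea is a countable-index argument. Since $C$ is open in $N$ and $x\mapsto\pi^{-1}x$ is continuous on $N$, the submodule $\pi C$ is open in $N$. Because $N$ is separable, $N/\pi C$, and hence $C/\pi C$, is countable. Now $C$ is closed in $X$, so it is Polish in the topology from $X$. It is therefore a countable union of cosets $c+\pi C$, each closed in $X$ because $x\mapsto\pi x$ is a homeomorphism of $X$. By Baire category one coset, and hence $\pi C$, has nonempty interior in $C$. A subgroup with nonempty interior is open, so $\pi C$ is open in $C$. This is the paper's argument.

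Your open-mapping idea would also work, but you have to carry it out. $C$ is an open, hence clopen, subgroup of $N$, so both the $N$-topology and the $X$-topology on $C$ are Polish group topologies, and the identity from the first to the second is continuous. A continuous bijective homomorphism of Polish groups is a homeomorphism, so the two topologies agree on $C$. Then $\pi C$, being open in $N$, is open in $C$ relative to $X$.
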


\begin{proof}
If $X/N$ has phantom length less than $1$, then, since $N$ is
non-Archimedean, the phantom length of $X/N$ is at most $1/2$. Then by the
Baire Category Theorem, $N$ has a zero neighborhood $B$ that is closed in $X$%
. One can take $B$ to be an \emph{open lattice }in $N$.

We claim that $\pi B$ is open in $B$ for the topology inherited from $X$.
Multiplication by $\pi $ is a homeomorphism of $X$, hence $\pi B$ is closed
in $X$. It is also open in $N$ for the given Fr\'{e}chet topology of $N$,
because $B$ is open in $N$. Since $N$ is separable in that topology, the
discrete quotient $N/\pi B$ is countable. In particular, $B/\pi B$ is
countable, and $\pi B$ is open in $B$.

Proposition \ref{Proposition:closed-lattice} now implies that $KB$ is closed
in $X$. We show that $KB=N$. Since $N$ is a vector subspace of $X$ and $%
B\subseteq N$, one has $KB\subseteq N$. Conversely, if $x\in N$, then $\pi
^{n}x\rightarrow 0$ in $N$. As $B$ is open in $N$, there exists $n\in 
\mathbb{N}$ such that $\pi ^{n}x\in B$. Hence $x\in \pi ^{-n}B\subseteq KB$.
Thus $N=KB$ is closed in $X$. Finally, $N$ is dense in $X$ because $X/N$ is
phantom, and therefore $N=X$ and $X/N=0$.
\end{proof}

From Lemma \ref{Lemma:length-less-than-1} we obtain immediately the
following restriction on the possible values of the phantom length of a
space with a Fr\'echet cover.

\begin{lemma}
\label{Lemma:upper-bound}Let $X/N$ be a space with a Fr\'echet cover. Then:

\begin{enumerate}
\item the phantom length $\ell _{\mathrm{Ph}}\left( X/N\right) $ is in $%
\omega _{1}\subseteq \omega _{1}^{\mathrm{Ph}}$;

\item if $X/N$ is a space with a Banach cover, then $\ell _{\mathrm{Ph}%
}\left( X/N\right) $ is either $0$ or a successor ordinal.
\end{enumerate}
\end{lemma}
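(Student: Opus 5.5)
Both parts follow from Lemma~\ref{Lemma:length-less-than-1}, applied at each stage of the phantom filtration $\mathrm{Ph}^{\alpha}(X/N)=X_\alpha/N$. Since $N$ is a non-Archimedean Polish group (a Fr\'echet space has a zero neighborhood basis of open lattices, which are subgroups), the general theory recalled in Section~\ref{Section:complexity} already gives $\ell_{\mathrm{Ph}}(X/N)\in\omega_1[1/2]$. So it remains to exclude values of the form $\alpha+1/2$ and, for Banach covers, limit ordinals.

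For (1), suppose $\ell_{\mathrm{Ph}}(X/N)\le\alpha+1/2$. By definition, $\{\ast\}\in\boldsymbol{\Sigma}^0_2(\mathrm{Ph}^\alpha(X/N))$, i.e.\ $N$ is $\boldsymbol{\Sigma}^0_2$ in $X_\alpha$. Here $X_\alpha$ is a Fr\'echet subspace (as stated in the paper, $\mathrm{Ph}^\alpha$ is a subspace with a Fr\'echet cover). Let $\overline{N}$ be the closure of $N$ in $X_\alpha$; it is a closed subspace, hence a Fr\'echet space in the induced topology. Then $\overline{N}/N$ is a phantom Fr\'echet space in which $N$ is still $\boldsymbol{\Sigma}^0_2$, so its length is $\le 1/2<1$. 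Lemma~\ref{Lemma:length-less-than-1} then gives $N=\overline{N}$, so $N$ is closed in $X_\alpha$. Hence $\ell_{\mathrm{Ph}}\le\alpha+1$, and in fact $\mathrm{Ph}^{\alpha+1}(X/N)$ is trivial. It remains to see that the length is then at most $\alpha$, not merely $\alpha+1$ (or, if the length is $\alpha+1$, that $\alpha+1/2$ was never attained). I would argue: if the length were exactly $\alpha+1/2$, then the least $\beta$ with $\mathrm{Ph}^\beta$ trivial would be $\alpha+1$, and $\{\ast\}$ closed in $\mathrm{Ph}^\alpha$ means $\ell\le\alpha+1$ only. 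The cleaner route is to note that $\boldsymbol{\Sigma}^0_2$ already sits below closedness in the ordering $\alpha+1/2<\alpha+1$, and we showed $\Sigma^0_2$ implies closed. So $\Gamma_{\alpha+1/2}$-membership of $N$ gives $\Gamma_{\alpha+1}$-membership, and the minimal class is never exactly $\alpha+1/2$ unless it is $\le\alpha$. This proves $\ell_{\mathrm{Ph}}\in\omega_1$.

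For (2), let $N$ be Banach and suppose $\ell_{\mathrm{Ph}}(X/N)=\lambda$ with $\lambda$ a limit ordinal. The paper states that $\mathrm{Ph}^{\beta+1}(X/N)$ is a space with a Banach cover for every $\beta$, so each $X_{\beta+1}$ is a Banach space. The subgroups $X_\beta$ decrease, $N=X_\lambda=\bigcap_{\beta<\lambda}X_\beta=\bigcap_{\beta<\lambda}X_{\beta+1}$, and $X_\beta\ne N$ for $\beta<\lambda$. Consider $\widetilde{X}:=\varprojlim_{\beta<\lambda}X_{\beta+1}$ along a cofinal sequence $\beta_n$. This is a Fr\'echet space whose underlying set is $N$, and $N$ injects continuously into it; by the open mapping theorem the two topologies coincide. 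So $N$ is Banach and carries the inverse-limit topology. A bounded zero neighborhood of $N$ is then contained in, and open in, some basic neighborhood coming from a single $X_{\beta_n+1}$. Hence the topology of $N$ is induced from $X_{\beta_n+1}$, so $N$ is open-lattice-closed in $X_{\beta_n+1}$. More precisely, $N$ is a Banach subspace whose unit ball $B$ is closed in the norm of $X_{\beta_n+1}$. Applying Proposition~\ref{Proposition:closed-lattice} as in the proof of Lemma~\ref{Lemma:length-less-than-1} (countability of $B/\pi B$ gives $\pi B$ open in $B$) shows $N=KB$ is closed in $X_{\beta_n+1}$. Thus $\ell_{\mathrm{Ph}}\le\beta_n+2<\lambda$, a contradiction.

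The main obstacle is this last step: that the Banach (locally bounded) topology of $N$ agrees with the subspace topology from a single $X_{\beta_n+1}$, i.e.\ that boundedness of a neighborhood forces domination of the inverse-limit seminorms by one stage. I would argue it via the definition of boundedness: the unit ball $U$ of $N$ is absorbed by each basic neighborhood, and $U$ contains some basic neighborhood of stage $n$ intersected with $N$.
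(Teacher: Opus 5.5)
Your part (1) has a gap at exactly the step you flag. From $N\in\boldsymbol{\Sigma}^0_2(X_\alpha)$ you correctly get that $N$ is closed in $X_\alpha$, by applying Lemma~\ref{Lemma:length-less-than-1} to $\overline{N}^{X_\alpha}/N$. But $\ell_{\mathrm{Ph}}(X/N)\le\alpha$ means $\mathrm{Ph}^\alpha(X/N)=\{\ast\}$, i.e.\ $N=X_\alpha$, and closedness alone does not give this. Your ``cleaner route'' does not close the gap. That membership in $\Gamma_{\alpha+1/2}$ implies membership in $\Gamma_{\alpha+1}$ is just monotonicity of the classes. The sentence ``the minimal class is never exactly $\alpha+1/2$ unless it is $\le\alpha$'' is the statement to be proved. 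Your belief that closedness in $\mathrm{Ph}^\alpha(X/N)$ yields \emph{only} $\ell_{\mathrm{Ph}}\le\alpha+1$ comes from an off-by-one reading of the indexing: here $\mathrm{Ph}^0(G/N)=\overline{N}/N$, so taking the closure is stage $0$, not a further stage.

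The missing ingredient is that $N$ is dense in $X_\alpha$ for the Polish topology of $X_\alpha$, i.e.\ $\mathrm{Ph}^\alpha(X/N)$ is itself a phantom Fr\'echet space. This is built into the construction of the phantom subgroups, and the paper invokes it explicitly in its proof of (2) (``Since $N$ is dense in $X_\beta$\ldots''). With it, $\overline{N}^{X_\alpha}=X_\alpha$, and Lemma~\ref{Lemma:length-less-than-1} applied directly to $X_\alpha/N$ gives $N=X_\alpha$. That is all the paper's one-line proof of (1) does.

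Part (2) is essentially the paper's argument. A bounded zero neighborhood $U$ of $N$ contains $W\cap N$ for a zero neighborhood $W$ of a single stage $X_\beta$ with $\beta<\lambda$. Since $(\pi^kU)_k$ is a neighborhood basis of $N$, the topology of $N$ is the subspace topology from $X_\beta$.

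Your claim that the unit ball $B$ is closed in $X_{\beta_n+1}$ is stated without proof. It follows from completeness of $N$: a sequence in $B$ converging in $X_{\beta_n+1}$ is Cauchy in $N$. The same argument already shows that $N$ itself is closed in $X_{\beta_n+1}$, so the detour through Proposition~\ref{Proposition:closed-lattice} is unnecessary.

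Your bound $\ell_{\mathrm{Ph}}\le\beta_n+2$ is weaker than what density gives ($N=X_{\beta_n+1}$). It still suffices because $\lambda$ is a limit ordinal. That slack is exactly what is missing in (1), where you must land on $\alpha$ itself.
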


\begin{proof}
(1) By \cite[Theorem 6.1]{lupini_complexity_2025}, as remarked above, the
phantom length belongs to $\omega _{1}[1/2]$. The conclusion thus follows
from Lemma \ref{Lemma:length-less-than-1} and the definition of phantom
length.

(2) This is the non-Archimedean version of the argument in \cite[Proposition
10.3]{lupini_complexity_2025}. By (1), it is enough to rule out a nonzero
limit ordinal. Write 
\begin{equation*}
\mathrm{Ph}^{\alpha }(X/N)=X_{\alpha }/N
\end{equation*}%
for $\alpha <\omega _{1}$. Suppose that $N=X_{\lambda }$ for some limit
ordinal $\lambda $. Since by assumption $N$ is a Banach space, it has a
bounded zero neighborhood $U$. Since the Polish topology on $X_{\lambda }$
is the projective limit topology induced from $X_{\alpha }$ for $\alpha
<\lambda $, there exists $\beta <\lambda $ and a zero neighborhood $W$ in $%
X_{\beta }$ such that $W\cap N\subseteq U$. Since $U$ is bounded, $\left(
\pi ^{n}U\right) _{n\in \mathbb{N}}$ is a basis of zero neighborhoods in $N$%
. Since $W\cap N\subseteq U$ we have%
\begin{equation*}
\pi ^{n}W\cap N\subseteq \pi ^{n}U
\end{equation*}%
for every $n\in \mathbb{N}$. Since $\pi ^{n}W$ is a zero neighborhood in $%
X_{\beta }$ for every $n\in \mathbb{N}$, this proves that $N$ is endowed
with the subspace topology inherited from $X_{\beta }$. Since $N$ is dense
in $X_{\beta }$, this forces $N=X_{\beta }$. As this holds for an arbitrary
countable limit ordinal $\lambda $, we conclude that the phantom length of $%
X/N$ is not a limit ordinal.
\end{proof}

\section{Bushes and their spaces\label{Section:Trees}}

We now recall some notions pertaining to (order-theoretic) trees and bushes,
as can be found in \cite%
{kechris_classical_1995,schroder_ordered_2016,rudeanu_sets_2012,harzheim_ordered_2005,davey_introduction_1990}%
.

\subsection{Well-founded relations}

A relation $R$ on a nonempty set $X$ is \emph{well-founded} if every
nonempty subset of $X$ has an $R$-minimal element. In this case, one can
define the $R$-rank $\mathrm{rk}_{R}\left( x\right) $ of an element $x$ of $%
X $ by well-founded recursion. Thus, $\mathrm{rk}_{R}\left( x\right) =0$ if
and only if $x$ is $R$-minimal, and%
\begin{equation*}
\mathrm{rk}_{R}\left( x\right) =\mathrm{sup}\left\{ \mathrm{rk}_{R}\left(
y\right) +1:yRx\right\}
\end{equation*}%
otherwise. Then%
\begin{equation*}
\mathrm{rk}\left( R\right) :=\mathrm{sup}\left\{ \mathrm{rk}_{R}\left(
x\right) +1:x\in X\right\} \text{.}
\end{equation*}%
In what follows, we will mainly consider well-founded relations that are
(strict) partial orders, i.e., irreflexive and transitive.

\subsection{Order-theoretic trees}

An ordered set $P$ is \emph{directed} if for every $x,y\in P$ there exists $%
z\in P$ such that $x\leq z$ and $y\leq z$. A \emph{forest} is an ordered set 
$T$ such that, for every $s\in T$, the set%
\begin{equation*}
\uparrow s:=\left\{ t\in T:s\leq t\right\}
\end{equation*}%
of elements above $s$ is a \emph{finite linear order}. For $s\in T$ we also
set%
\begin{equation*}
\downarrow s:=\left\{ t\in T:t\leq s\right\} \text{.}
\end{equation*}%
A countable (rooted) \emph{tree} is a countable forest that is \emph{directed%
}. This is equivalent to the assertion that $T$ has a largest element,
called the \emph{root}. A tree is \emph{well-founded} if it is well-founded
as an ordered set, in which case its rank is defined as above. The leaves of
a well-founded tree are its minimal elements, i.e., the nodes of rank $0$. A
countable tree is \emph{infinitely branching} if every node that is not a
leaf has infinitely many immediate predecessors (children). It is easily
seen that any \emph{forest} admits a partition into pairwise-incomparable
trees.

In a forest $F$, we define the children of $x\in F$ to be its immediate
predecessors (if any). We let $\mathrm{Ch}\left( x\right) $ be the (possibly
empty) set of children of $x$.

\subsection{The bush topology}

We define:

\begin{itemize}
\item a \emph{bushland }to be a countable well-founded forest such that
every node has either none or infinitely many immediate predecessors;

\item a \emph{bush }to be a bushland that is also a tree.
\end{itemize}

\begin{definition}
\label{Definition:bush-topology}Let $B$ be a bushland. A subset $O$ of $B$
is \emph{open }if for every $x\in O$, $O$ contains the set $\downarrow t$
for all but finitely many children $t$ of $x$.
\end{definition}

The condition in Definition \ref{Definition:bush-topology} defines a
topology. Observe also that any downward-closed set is open. We now
establish some properties of the bush topology. Given a sequence $\left(
b_{n}\right) $ in $B$, we define an \emph{eventual upper bound} of $\left(
b_{n}\right) $ to be an upper bound of one of its tails. We say $%
b=\limsup_{n}b_{n}$ if and only if $b$ is the \emph{least} element of the
set of eventual upper bounds. For $\alpha <\omega _{1}$ we define:

\begin{itemize}
\item $\delta _{\alpha }B$ to be the set of nodes of $B$ of rank less than $%
\alpha $;

\item $\sigma _{\alpha }B$ to be the set of nodes of $B$ of rank equal to $%
\alpha $;

\item $\partial _{\alpha }B$ to be the set of nodes of $B$ of rank greater
than or equal to $\alpha $.
\end{itemize}

Recall that a topological space is \emph{scattered }if every nonempty
subspace has an isolated point; see \cite[Exercise 30E]{willard_general_2004}
and \cite[1.7.10]{engelking_general_1989}.

\begin{theorem}
\label{Theorem:bush-topology}Let $B$ be a bushland and $\alpha <\omega _{1}$%
. Then:

\begin{enumerate}
\item every node of $B$ has a countable neighborhood basis;

\item $B$ has a countable basis of compact open sets;

\item $B$ is compact if and only if it has finitely many roots;

\item $B$ is Hausdorff;

\item a sequence $\left( b_{n}\right) $ in $B$ converges to $b$ if and only
if $b=\limsup_{k}b_{n_{k}}$ for each subsequence $\left( b_{n_{k}}\right) $;

\item $\sigma _{\alpha }B$ is discrete;

\item $B$ is scattered.
\end{enumerate}
\end{theorem}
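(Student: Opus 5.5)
The plan is to work directly from Definition \ref{Definition:bush-topology}, using one basic family of neighborhoods. For $x\in B$ and a finite set $F\subseteq \mathrm{Ch}(x)$, set
\begin{equation*}
U(x,F):=\{x\}\cup \bigcup_{t\in \mathrm{Ch}(x)\setminus F}\downarrow t .
\end{equation*}
The first step is to show that $U(x,F)$ is open. A point $y\neq x$ of $U(x,F)$ lies in some $\downarrow t$, and every child of $y$ lies below $y$, hence in $\downarrow t$; this downward-closed part is therefore harmless. At $x$ itself the defining condition holds by construction. Conversely, every open $O\ni x$ contains some $U(x,F)$, with $F$ the finite set of exceptional children. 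Since $\mathrm{Ch}(x)$ is countable, there are countably many finite $F$, which gives (1). Because $B$ is countable, the family of all $U(x,F)$ is a countable basis.

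For compactness of $U(x,F)$, I argue by well-founded induction on $\mathrm{rk}(x)$. Let $\mathcal{U}$ be an open cover of $U(x,F)$. Some member of $\mathcal{U}$ contains $x$, and hence contains $\downarrow t$ for all but finitely many children $t$. Each of the remaining finitely many children $t$ satisfies $\downarrow t=U(t,\emptyset)$, which is compact by the inductive hypothesis. Together these give a finite subcover, proving (2).

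For (3): if $B$ has finitely many roots $r_1,\dots,r_k$, then $B=\bigcup_i U(r_i,\emptyset)$ is a finite union of compact sets, hence compact. If there are infinitely many roots, the sets $\downarrow r$ for roots $r$ form a disjoint open cover with no finite subcover.

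For (4), take $x\neq y$.
\begin{itemize}
\item If they are incomparable, then $\downarrow x$ and $\downarrow y$ are disjoint open sets, since in a forest $\downarrow x\cap\downarrow y\neq\emptyset$ forces comparability.
\item If $y<x$, let $t$ be the child of $x$ with $y\le t$. Then $U(x,\{t\})$ and $\downarrow y$ are disjoint.
\end{itemize}

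For (6), a node $x$ of rank $\alpha$ has children of rank $<\alpha$ only. Hence $U(x,\emptyset)\cap\sigma_\alpha B=\{x\}$, so $\sigma_\alpha B$ is discrete.

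For (7), let $S\neq\emptyset$ and pick $x\in S$ of minimal rank. Then every $y<x$ has strictly smaller rank, so $\downarrow x\cap S=\{x\}$ and $x$ is isolated in $S$.

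The main obstacle is (5). My plan is to characterize convergence $b_n\to b$ as follows: eventually $b_n\le b$, and for each child $t$ of $b$ only finitely many $n$ have $b_n\le t$. This is immediate from the neighborhood basis $U(b,F)$ together with the fact that $\downarrow b$ is open.

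Now I compare this with the $\limsup$ condition. Suppose $b_n\to b$ and take a subsequence. Then $b$ is an eventual upper bound of it. Suppose $c$ is another eventual upper bound. Since $\uparrow b_{n_k}$ is a finite chain, $c$ and $b$ are both above a tail of the terms and are therefore comparable. If $c<b$, then $c\le t$ for a single child $t$ of $b$, and infinitely many $b_{n_k}\le t$, a contradiction. Hence $b$ is the least eventual upper bound.

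Conversely, assume $b=\limsup_k b_{n_k}$ for every subsequence. Then $b$ is an eventual upper bound of the whole sequence, so eventually $b_n\le b$. If infinitely many $b_n\le t$ for some child $t$ of $b$, that subsequence has the eventual upper bound $t<b$, a contradiction. This verifies the characterization, so $b_n\to b$.

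The delicate points are keeping track of "eventual" versus "all tails" and using the forest structure to get comparability of upper bounds.
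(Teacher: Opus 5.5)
Your proposal is correct and takes essentially the same route as the paper: the same basic neighborhoods $U(x,F)$, induction on rank for compactness, the same separating sets for (4), and the same subsequence/eventual-upper-bound argument for (5). The only cosmetic difference is that you first state the explicit criterion for $b_n\to b$ (eventually $b_n\le b$, and each child of $b$ lies above only finitely many terms), which the paper uses implicitly through the neighborhoods $U(b,F)$.
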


\begin{proof}
For $x\in B$, write $\mathrm{Ch}(x)$ for the set of children of $x$. If $F$
is a finite subset of $\mathrm{Ch}(x)$, put 
\begin{equation*}
U(x,F):=\{x\}\cup \bigcup_{t\in \mathrm{Ch}(x)\setminus F}\downarrow t.
\end{equation*}%
The set $U(x,F)$ is open. Moreover, if $O$ is an open set containing $x$,
then the definition of the bush topology yields a finite set $F\subseteq 
\mathrm{Ch}(x)$ such that $U(x,F)\subseteq O$. Since $B$ is countable, there
are only countably many pairs $(x,F)$ of this form. Thus the sets $U(x,F)$
give a countable neighborhood basis at every point and, collectively, a
countable basis for $B$.

We next show, by induction on the rank of $x$, that every $U(x,F)$ is
compact. If $x$ is a leaf, then $U(x,F)=\{x\}$. Suppose that the assertion
has been established below $x$, and let $\mathcal{U}$ be an open cover of $%
U(x,F)$. Choose $V\in \mathcal{U}$ with $x\in V$. There is a finite set $%
E\subseteq \mathrm{Ch}(x)$ such that $\downarrow t\subseteq V$ for every $%
t\in \mathrm{Ch}(x)\setminus E$. The part of $U(x,F)$ not already covered by 
$V$ is therefore contained in the union of the finitely many principal
subtrees $\downarrow t$ with $t\in E\setminus F$. Each of these subtrees is
compact by the inductive hypothesis, since $\downarrow t=U(t,\varnothing )$
and $t$ has rank strictly smaller than $x$. Consequently, finitely many
elements of $\mathcal{U}$, together with $V$, cover $U(x,F)$. This proves
compactness and establishes (1) and (2).

For (3), observe that the principal subtrees below distinct roots are
pairwise disjoint open sets and that 
\begin{equation*}
B=\bigcup \{\downarrow r:r\text{ is a root of }B\}.
\end{equation*}%
Every $\downarrow r=U(r,\varnothing )$ is compact by the preceding argument.
Hence $B$ is compact when it has finitely many roots. If it has infinitely
many roots, the displayed family is an open cover with no finite subcover.

For (4), let $x,y\in B$ be distinct. If they are incomparable, then $%
\downarrow x$ and $\downarrow y$ are disjoint open neighborhoods of $x$ and $%
y$. Suppose instead, after interchanging $x$ and $y$ if necessary, that $x<y$%
. There is a unique child $s$ of $y$ such that $x\leq s$. The sets 
\begin{equation*}
\downarrow x\quad \text{and}\quad U(y,\{s\})
\end{equation*}%
are disjoint open neighborhoods of $x$ and $y$, respectively. Thus $B$ is
Hausdorff.

We now prove (5). Suppose first that $b_{n}\rightarrow b$, and let $%
(b_{n_{k}})$ be a subsequence. Since $\downarrow b$ is an open neighborhood
of $b$, we have $b_{n_{k}}\leq b$ for all sufficiently large $k$, so $b$ is
an eventual upper bound. Let $c$ be any eventual upper bound of $(b_{n_{k}})$%
. For a sufficiently large $k$ we have simultaneously $b_{n_{k}}\leq b$ and $%
b_{n_{k}}\leq c$. The forest property implies that $b$ and $c$ are
comparable. If $c<b$, let $s$ be the child of $b$ such that $c\leq s$. The
neighborhood $U(b,\{s\})$ is disjoint from $\downarrow c$, contradicting
both the convergence of $(b_{n_{k}})$ to $b$ and the fact that its terms are
eventually bounded above by $c$. Therefore $b\leq c$. Thus $b$ is the least
eventual upper bound of every subsequence.

Conversely, suppose that 
\begin{equation*}
b=\limsup_{k}b_{n_{k}}
\end{equation*}%
for every subsequence $(b_{n_{k}})$. Applying the assumption to the original
sequence shows that $b_{n}\leq b$ eventually. Let $O$ be a neighborhood of $%
b $, and choose a finite $F\subseteq \mathrm{Ch}(b)$ such that $%
U(b,F)\subseteq O $. If infinitely many $b_{n}$ were outside $O$, we could
choose a subsequence outside $O$ whose terms are all below $b$. Every term
of this subsequence would lie below a child in $F$. Passing to a further
subsequence, there would be a fixed $s\in F$ above all its terms. Hence $s$
would be an eventual upper bound of that further subsequence. Its least
eventual upper bound is $b$ by hypothesis, so $b\leq s$, contradicting $s<b$%
. It follows that $b_{n}\rightarrow b$.

For (6), if $x\in \sigma _{\alpha }B$, then $\downarrow x$ is an open
neighborhood of $x$. Every strict predecessor of $x$ has rank strictly less
than $\alpha $, and hence $\downarrow x{}\cap \sigma _{\alpha }B=\{x\}$.
Thus $\sigma _{\alpha }B$ is discrete.

Finally, let $Y$ be a nonempty subset of $B$. Since the order is
well-founded, $Y$ has a minimal element $y$. Then 
\begin{equation*}
Y\cap \ \downarrow y=\{y\}.
\end{equation*}%
As $\downarrow y$ is open, $y$ is isolated in $Y$. This proves (7).
\end{proof}

\subsection{Canonical bushes\label{Subsection:bush}}

For every nonzero countable ordinal $\xi $, define a sequence $(\xi
_{n})_{n\in \mathbb{N}}$ of ordinals below $\xi $ as follows. If $\xi $ is a
successor ordinal, set $\xi _{n}=\xi -1$ for every $n\in \mathbb{N}$. If $%
\xi $ is a limit ordinal, let $(\xi _{n})_{n\in \mathbb{N}}$ be a strictly
increasing cofinal sequence of \emph{successor }ordinals in $\xi $.

We define recursively for $\alpha <\omega _{1}$ a bushland $T^{\alpha }$
and, for $1\leq \alpha $, a bushland $I^{\alpha }$ as in \cite[Section 8]%
{lupini_complexity_2025}. Define thus $T^{0}=\left\{ 0\right\} $. Suppose
that $T^{\beta }$ have been defined for $\beta <\alpha $.\ Define $I^{\alpha
}$ to be the union of pairwise incomparable copies of $T^{\alpha _{n}}$ for $%
n\in \mathbb{N}$. We represent the elements of $I^{\alpha }$ as pairs $%
\left( i;n\right) $ for $n\in \mathbb{N}$ and $i\in T^{\alpha _{n}}$. Define
then $T^{\alpha }$ to be $I^{\alpha }\cup \left\{ \alpha \right\} $ with
root $\alpha $.

For $i\in T^{\alpha }$ of positive rank, define the sequence $(i\smallfrown
n)_{n\in \mathbb{N}}$ of its children recursively by setting 
\begin{equation*}
\alpha \smallfrown n=(\alpha _{n};n)
\end{equation*}%
and 
\begin{equation*}
(i;k)\smallfrown n=(i\smallfrown n;k)
\end{equation*}%
for $i\in T^{\alpha _{k}}$ and $n,k<\omega $. Thus the subtree below $\alpha
\smallfrown n$ is isomorphic to $T^{\alpha _{n}}$.

Define $T_{\gamma }^{\alpha }$ to be the set of nodes of $T^{\alpha }$ of
rank equal to $\gamma $. An element of $T_{\gamma }^{\alpha }$ can be
represented as $(k_{1},\ldots ,k_{\ell })$, where $k_{1},\ldots ,k_{\ell
}\in \mathbb{N}$ and $\ell \geq 0$ is the \emph{height}; the empty tuple
represents the root. The \emph{walk} from $\alpha $ associated with this
node is the sequence $(\beta _{0},\ldots ,\beta _{\ell })$ defined by $\beta
_{0}=\alpha $, $\gamma =\beta _{\ell }$, and 
\begin{equation*}
\beta _{i}=\left( \beta _{i-1}\right) _{k_{i}}\qquad (1\leq i\leq \ell )%
\text{.}
\end{equation*}%
Thus nodes of rank $\beta $ in $T^{\alpha }$ can be regarded as walks from $%
\alpha $ to $\beta $. We will also note the following finiteness
consequence. If $v\in T^{\alpha }$ and $\gamma <\mathrm{rk}(v)$, then only
finitely many children $v\smallfrown n$ have rank strictly below $\gamma $.

Every $\beta \leq \alpha $ occurs as the rank of a node of $T^{\alpha }$.
Indeed, starting at a node of rank $\xi >\beta $, pass to a child of rank $%
\xi -1$ when $\xi $ is a successor, and choose a child of rank $\xi _{n}\geq
\beta $ when $\xi $ is a limit. This process terminates after finitely many
steps, since there is no infinite strictly decreasing sequence of ordinals.

\section{Realization\label{Section:realization}}

In this section, we produce examples of phantom Banach spaces and phantom Fr%
\'{e}chet spaces that realize all the possible values of the phantom
spectrum. We continue to assume that $K$ is a Polish non-Archimedean
non-trivially valued field.

\subsection{Spaces}

We begin by recalling some terminology from \cite[Chapter 3]%
{van_rooij_non-archimedean_1978}. For a \emph{set} $X$ one defines $\ell
^{\infty }\left( X\right) $ to be the Banach space of bounded functions $%
X\rightarrow K$ endowed with the norm%
\begin{equation*}
\left\Vert f\right\Vert _{\infty }:=\mathrm{sup}\left\{ \left\vert f\left(
x\right) \right\vert :x\in X\right\} \text{;}
\end{equation*}%
see \cite[Example 3.A]{van_rooij_non-archimedean_1978}. One defines $%
c_{0}\left( X\right) $ to be the closed subspace of $\ell ^{\infty }\left(
X\right) $ comprising the bounded $f:X\rightarrow K$ such that, for every $%
\varepsilon >0$, 
\begin{equation*}
\left\{ x\in X:\left\vert f\left( x\right) \right\vert >\varepsilon \right\}
\end{equation*}%
is \emph{finite}; see \cite[Example 3.B]{van_rooij_non-archimedean_1978}. In
particular, when $X=\mathbb{N}$ one sets $\ell ^{\infty }=\ell ^{\infty
}\left( \mathbb{N}\right) $ and $c_{0}=c_{0}\left( \mathbb{N}\right) $.

More generally, given a separable Banach space $E$ one can define the space $%
\ell ^{\infty }\left( X,E\right) $ of bounded functions $X\rightarrow E$
with the supremum norm. Then $c_{0}\left( X,E\right) $ is the closure of the
subspace of finitely-supported functions.

Notice that a Polish space is zero-dimensional if and only if it has a
compatible ultrametric. Suppose that $X$ is a \emph{locally compact }%
zero-dimensional Polish space, and $E$ is a separable Banach space. One
defines $C_{b}\left( X,E\right) $ to be the space of all \emph{bounded }and 
\emph{continuous }functions $X\rightarrow E$, which is again a closed
subspace of $\ell ^{\infty }\left( X,E\right) $; cf. \cite[Example 3.D]%
{van_rooij_non-archimedean_1978} for the scalar-valued case. One also
defines $C_{0}\left( X,E\right) $ to be the closed subspace of $C_{b}\left(
X,E\right) $ comprising the functions $f:X\rightarrow E$ that \emph{vanish
at infinity}, in the sense that, for every $\varepsilon >0$,%
\begin{equation*}
\left\{ x\in X:\left\Vert f\left( x\right) \right\Vert \geq \varepsilon
\right\}
\end{equation*}%
is compact; cf. \cite[Example 3.F]{van_rooij_non-archimedean_1978} for the
scalar-valued case. When $X$ is a locally compact zero-dimensional Polish
space and $E$ is a separable Banach space, $C_{0}\left( X,E\right) $ is a
separable Banach space. Indeed, fix a countable basis $\mathcal{U}$ of
compact open subsets of $X$ and a countable dense subset $D$ of $E$. The
functions of the form $\sum_{j=1}^{m}\mathbbm{1}_{U_{j}}e_{j}$, where $%
U_{j}\in \mathcal{U}$ and $e_{j}\in D$, form a countable dense subset of $%
C_{0}\left( X,E\right) $. For the scalar-valued compact case, see \cite[%
Exercise 3.T]{van_rooij_non-archimedean_1978}. (Note that we are assuming
that $K$ is Polish, whence a Banach space over $K$ is separable if and only
if it has countable type \cite[pp. 65--66]{van_rooij_non-archimedean_1978}.)
When $X$ is a discrete topological space, $C_{0}\left( X,E\right)
=c_{0}\left( X,E\right) $. When $X$ is a compact zero-dimensional Polish
space, $C_{0}\left( X,E\right) =C_{b}\left( X,E\right) $ is the space of all
continuous functions $X\rightarrow E$. In this case, we denote this space by 
$C\left( X,E\right) $.

If $\left( V_{n}\right) $ is a sequence of Banach spaces, we define%
\begin{equation*}
\bigoplus\nolimits_{n}^{\mathrm{c}_{0}}V_{n}:=\left\{ \left( v_{n}\right)
\in \prod_{n\in \mathbb{N}}V_{n}:\mathrm{lim}_{n}\left\Vert v_{n}\right\Vert
=0\right\}
\end{equation*}%
endowed with the norm%
\begin{equation*}
\left\Vert \left( v_{n}\right) \right\Vert =\mathrm{sup}_{n}\left\Vert
v_{n}\right\Vert \text{.}
\end{equation*}%
For Banach spaces $V$ and $W$, we also let $V\oplus W$ be the Banach space
obtained from the algebraic direct sum endowed with the norm%
\begin{equation*}
\left\Vert v\oplus w\right\Vert :=\max \left\{ \left\Vert v\right\Vert
,\left\Vert w\right\Vert \right\} \text{.}
\end{equation*}

\subsection{Models}

We conclude the proof of Theorem \ref{Theorem:main} by producing spaces with
a Banach cover of any given possible phantom length.

\begin{theorem}
\label{Theorem:bush-length}For every ordinal $\delta <\omega _{1}$, there
exist Banach spaces $X$ and a Banach subspace $N$ such that $X/N$ has
phantom length equal to $\delta +1$.
\end{theorem}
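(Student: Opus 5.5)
We construct the example from the canonical bush $T^{\delta}$ of Section~\ref{Subsection:bush}, in analogy with \cite[Section 8]{lupini_complexity_2025}. Regard $T^{\delta}$ with the bush topology. By Theorem~\ref{Theorem:bush-topology} it is a compact, scattered, zero-dimensional Polish space, so $X:=C(T^{\delta},K)$ is a separable Banach space. For $N$ we take a Banach subspace encoding ``convergence along the tree with weights'': the range of an injective bounded operator $S:c_{0}(T^{\delta})\rightarrow X$ defined on the unit vectors by $S e_{t}=\pi^{h(t)}\mathbbm{1}_{\downarrow t}$, where $h(t)$ is the height of $t$. The idea is that the characteristic functions of the clopen sets $\downarrow t$ span a dense subspace of $X$, since they form a basis of clopen sets. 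Hence $N$ is dense and $X/N$ is phantom. The weights $\pi^{h(t)}$ make $S$ bounded but not bounded below, so $N\neq X$. Alternatively, if the scalar weights give the wrong length at limit stages, we use $c_{0}$-direct sums $\bigoplus^{c_0}$ of the spaces attached to the subtrees $\downarrow(\delta\smallfrown n)\cong T^{\delta_{n}}$ and argue recursively.

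The proof then computes the Solecki subgroups $\mathrm{Ph}^{\alpha}(X/N)=X_{\alpha}/N$ for every $\alpha\leq\delta+1$. The key claim, proved by induction on $\alpha$, is that $X_{\alpha}$ is the set of $f\in X$ whose ``$N$-part'' is controlled on $\partial_{\alpha}T^{\delta}$. More precisely, $X_{\alpha}$ should consist of those $f$ whose finite-difference coefficients $f(t)-f(\mathrm{parent})$ are weighted-summable in $c_0$ at all nodes of rank $\geq\alpha$, with no condition on the nodes of rank $<\alpha$. Concretely:

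\begin{enumerate}
\item For the lower bound we show that $N$ is dense, in the relevant Polish topology, in the candidate space $Y_{\alpha}$. We also show that $Y_{\alpha}$ is $\boldsymbol{\Pi}^0_{1+\alpha}$ in $X$, so that $X_{\alpha}\subseteq Y_{\alpha}$.
\item For the upper bound we verify $Y_{\alpha}\subseteq X_{\alpha}$. For successor steps we use the description of $\mathrm{Ph}^{\alpha+1}$ as the closure of $N$ in $\mathrm{Ph}^{\alpha}$ with respect to the topology of the next Polishable subgroup, i.e.\ Solecki's first-derivative construction. For limit steps we take intersections, and the cofinal sequences $\delta_n$ ensure that these intersections are correct.
\end{enumerate}

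Given the claim, we conclude as follows. $Y_{\delta}$ imposes conditions only at the root, so $X_{\delta}\neq N$, while $X_{\delta+1}=N$. Hence $\ell_{\mathrm{Ph}}(X/N)=\delta+1$ exactly. The fact that the length is not a limit ordinal is already consistent with Lemma~\ref{Lemma:upper-bound}(2).

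The main obstacle is the lower bound $X_{\alpha}\neq N$ for $\alpha\leq\delta$, that is, exhibiting an element of $X_{\delta}\setminus N$ and showing it survives every derivative stage. For this we first build, by recursion along the walks of Section~\ref{Subsection:bush}, a function $f\in X$ that is the limit of partial sums $\sum_{t}a_{t}\pi^{h(t)}\mathbbm{1}_{\downarrow t}$ in the Banach topology of each $Y_{\beta}$, $\beta<\alpha$, but whose coefficient sequence at nodes of rank $\alpha$ does not tend to $0$. We then use the finiteness property, namely that only finitely many children of $v$ have rank $<\gamma$ when $\gamma<\mathrm{rk}(v)$, together with convergence in the bush topology (Theorem~\ref{Theorem:bush-topology}(5)), to check continuity of $f$. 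The non-Archimedean ultrametric inequality should make these norm estimates cleaner than in the real case, since suprema replace sums.
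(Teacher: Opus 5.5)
\textbf{There is a genuine gap.} Your space has phantom length at most $1$ for every $\delta$, so the inductive description of $\mathrm{Ph}^{\alpha}(X/N)$ you aim for cannot hold.

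Put $b_{t}(f):=f(t)-f(p(t))$ for $t$ other than the root $r$, where $p(t)$ is the parent of $t$, and put $b_{r}(f):=f(r)$. Evaluating at $s$ gives $(Sa)(s)=\sum_{t\geq s}a_{t}\pi^{h(t)}$, a finite sum along the chain from $s$ to $r$. Hence $S$ is injective and
\begin{equation*}
N=\{f\in X:(\pi^{-h(t)}b_{t}(f))_{t\in T^{\delta}}\in c_{0}(T^{\delta},K)\}\text{.}
\end{equation*}
Each $b_{t}$ is a continuous linear functional on $X$. So this is a condition of the form $\forall k\,\exists F\text{ finite}\,\forall t\notin F$ applied to closed sets, and therefore $N\in\boldsymbol{\Pi}_{3}^{0}(X)$. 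This means $N$ has Borel class at most $1$, and $\mathrm{Ph}^{1}(X/N)=\{\ast\}$.

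The same objection applies to each of your candidate spaces $Y_{\alpha}$. They are all defined by $c_{0}$-conditions on continuous functionals of $X$, so they are all $\boldsymbol{\Pi}_{3}^{0}$ in $X$, and no hierarchy of increasing complexity can arise this way. The real obstacle is therefore not the lower bound, as you suggest, but that the upper bound collapses everything at stage $1$.

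For finite $\delta$ the situation is even worse. Heights are then bounded by $\delta$, so the weights are harmless. Compactness of the bush also gives $(b_{t}(f))_{t}\in c_{0}$ for every continuous $f$: if distinct $t_{k}\to x$, then also $p(t_{k})\to x$. Hence $N=X$; for $\delta=0$ one simply has $X=N=K$.

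Your fallback via $c_{0}$-sums over the subtrees is not spelled out. In any case it offers no mechanism for raising the length at successor steps.

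\textbf{The missing idea} is that the value attached to an inner node must be an \emph{unbounded} functional defined only on the previous stage, not a continuous functional on the ambient space. In the paper:
\begin{itemize}
\item One sets $\eta=1+\delta$ and starts from $X_{0}=c_{0}$ over the leaves of $T^{\eta}$.
\item For $v$ of rank $\gamma$, one defines $x(v):=\lim_{n}\pi^{-n}x(v^{\frown}n)$, but only for those $x\in X_{<\gamma}$ for which these limits exist. The weights grow with the child index $n$, not with the height.
\item The space $X_{\gamma}$ is cut out by convergence of these sequences together with $c_{0}$-conditions over $T_{\leq\gamma}^{\eta}$ and over the edges crossing level $\gamma$. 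It carries the corresponding sup-norm.
\item Each $X_{1+\beta}$ is then a proper dense $\boldsymbol{\Pi}_{3}^{0}$ subspace of $X_{<1+\beta}$ \emph{for the Polish topology of} $X_{<1+\beta}$, not that of $X_{0}$.
\item One shows $\mathrm{Ph}^{\beta}(X_{0}/X_{\eta})=X_{<1+\beta}/X_{\eta}$ for $\beta\leq\delta$, which gives length exactly $\delta+1$.
\end{itemize}
Complexity thus accumulates one level at a time relative to ever finer topologies. This is exactly what your construction lacks.
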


\begin{proof}
The construction is the non-Archimedean version of the construction in \cite[%
Section 10]{lupini_complexity_2025}.

Put $\eta :=1+\delta $. Thus, if $\delta <\omega $, then $\eta =\delta +1$,
while if $\delta $ is infinite, then $\eta =\delta $. Recall from Section~%
\ref{Section:non-Archimedean} that $0<\left\vert \pi \right\vert <1$. For $%
n\in \mathbb{N}$, set%
\begin{equation*}
q_{n}:=\pi ^{-n}\text{.}
\end{equation*}%
Then:

\begin{itemize}
\item $\left\vert q_{n}\right\vert \longrightarrow +\infty $;

\item $\left\vert q_{n}^{-1}\right\vert \leq 1$;

\item $q_{n}^{-1}\longrightarrow 0$.
\end{itemize}

We let $T_{0}^{\eta }$ be the bush defined as in Section \ref%
{Subsection:bush}. Write $T_{\leq \gamma }^{\eta }$ to be the set of nodes
in $T^{\eta }$ of rank at most $\gamma $. For $\gamma \leq \eta $, define
the set of edges crossing the level $\gamma $ by%
\begin{equation*}
\kappa _{\gamma }T^{\eta }:=\{(v,n):\mathrm{rk}(v)>\gamma >\mathrm{rk}%
(v^{\frown }n)\}.
\end{equation*}%
Write $c(\overline{\mathbb{N}},K)$ for the Banach space of convergent
sequences in $K$, with the supremum norm. Set%
\begin{equation*}
X_{0}:=c_{0}(T_{0}^{\eta },K).
\end{equation*}%
For a terminal node $v$, put $\boldsymbol{x}(v)=(x(v))_{n\in \mathbb{N}}$.
Suppose that $0<\gamma \leq \eta $ and that $X_{\sigma }$ and the values $%
x(v)$ at nodes of rank at most $\sigma $ have been defined for $\sigma
<\gamma $. Put%
\begin{equation*}
X_{<\gamma }:=\bigcap_{\sigma <\gamma }X_{\sigma }
\end{equation*}%
with its projective Fr\'{e}chet topology. If $v$ has rank $\gamma $, set%
\begin{equation*}
\boldsymbol{x}(v):=(q_{n}x(v^{\frown }n))_{n\in \mathbb{N}}
\end{equation*}%
whenever this sequence converges, and then define%
\begin{equation*}
x(v):=\lim_{n\rightarrow \infty }q_{n}x(v^{\frown }n).
\end{equation*}%
Define $X_{\gamma }$ to be the set of $x\in X_{<\gamma }$ satisfying all
three conditions below:

\begin{enumerate}
\item $\boldsymbol{x}(v)$ converges for every $v\in T_{\gamma }^{\eta }$;

\item $(\boldsymbol{x}(v))_{v\in T_{\leq \gamma }^{\eta }}$ belongs to $%
c_{0}(T_{\leq \gamma }^{\eta },c(\overline{\mathbb{N}},K))$;

\item $(q_{n}x(v^{\frown }n))_{(v,n)\in \kappa _{\gamma }T^{\eta }}$ belongs
to $c_{0}(\kappa _{\gamma }T^{\eta },K)$.
\end{enumerate}

Endow $X_{\gamma }$ with the norm%
\begin{equation*}
\begin{split}
\Vert x\Vert _{\gamma }:=\max \{& \Vert (\boldsymbol{x}(v))_{v\in T_{\leq
\gamma }^{\eta }}\Vert _{c_{0}(T_{\leq \gamma }^{\eta },c(\overline{\mathbb{N%
}},K))}, \\
& \Vert (q_{n}x(v^{\frown }n))_{(v,n)\in \kappa _{\gamma }T^{\eta }}\Vert
_{c_{0}(\kappa _{\gamma }T^{\eta },K)}\}.
\end{split}%
\end{equation*}

Then one can prove by induction on $\beta \leq \delta $, which guarantees $%
1+\beta \leq 1+\delta =\eta $, that:

\begin{enumerate}
\item $X_{1+\beta }$ is a separable Banach space;

\item the inclusion $X_{1+\beta }\rightarrow X_{0}$ is continuous with dense
image;

\item $X_{1+\beta }$ is a proper dense $\boldsymbol{\Pi }_{3}^{0}$ subspace
of $X_{<1+\beta }$, whence $X_{<1+\beta }/X_{1+\beta }$ has phantom length
equal to $1$ by Lemma \ref{Lemma:length-less-than-1};

\item $\mathrm{Ph}^{\beta }\left( X_{0}/X_{\eta }\right) =X_{<\left( 1+\beta
\right) }/X_{\eta }$.
\end{enumerate}

In particular, this implies that 
\begin{equation*}
\mathrm{Ph}^{\delta }\left( X_{0}/X_{\eta }\right) =X_{<\left( 1+\delta
\right) }/X_{\eta }=X_{<\eta }/X_{\eta }\text{.}
\end{equation*}
Since 
\begin{equation*}
\mathrm{Ph}^{1}\left( X_{<\eta }/X_{\eta }\right) =0
\end{equation*}%
this in turn implies that $X_{0}/X_{\eta }$ has phantom length $\delta +1$,
concluding the proof.
\end{proof}

One can alternatively prove Theorem \ref{Theorem:bush-length} by considering
a non-Archimedean analogue of the spaces constructed by Saint-Raymond in 
\cite[Th\'eor\`eme 31]{saint-raymond_espaces_1976}.

We conclude with the proof of Theorem \ref{Theorem:main}, or its
reformulation as in Theorem \ref{Theorem:spectra}.

\begin{proof}
(1) By Lemma \ref{Lemma:upper-bound}(2), $\sigma _{\mathrm{Ph}}\left( 
\mathbf{Ban}\left( K\right) \right) $ is contained in the set of countable
zero or successor ordinals. The other inclusion follows from Theorem \ref%
{Theorem:bush-length}.

(2) By Lemma \ref{Lemma:upper-bound}(1), $\sigma _{\mathrm{Ph}}\left( 
\mathbf{Fre}\left( K\right) \right) $ is contained in the set of countable
ordinals. If $\lambda $ is a limit ordinal, then one can see that $\lambda
\in \sigma _{\mathrm{Ph}}\left( \mathbf{Fre}\left( K\right) \right) $ by
taking a product of phantom Banach spaces of phantom length $\lambda _{n}$
for $n\in \mathbb{N}$.
\end{proof}

\bibliographystyle{amsplain}
\bibliography{bibliography}

\end{document}